\documentclass[a4paper,10pt,fleqn]{amsart}

\usepackage{amssymb,amsmath,amsthm}
\usepackage{graphicx}
\usepackage{xypic}
\usepackage{color}
\usepackage{thmtools}
\usepackage{thm-restate}
\usepackage[shortlabels]{enumitem}
\usepackage{mathrsfs}
\usepackage[utf8]{inputenc}
\usepackage[T1]{fontenc}
\usepackage{cite}
\usepackage[hmarginratio={1:1},vmarginratio={1:1},textwidth=400pt,tmargin=1.3in,bmargin=1.3in]{geometry}

\newtheorem{theorem}{Theorem}[section]
\newtheorem{lemma}[theorem]{Lemma}
\newtheorem{proposition}[theorem]{Proposition}

\newtheorem{question}[theorem]{Question}
\newtheorem{corollary}[theorem]{Corollary}

\theoremstyle{definition}
\newtheorem{definition}[theorem]{Definition}

\theoremstyle{remark}
\newtheorem{remark}[theorem]{Remark}

\numberwithin{equation}{section}

\theoremstyle{plain}
\newtheorem*{question*}{Question}
\newtheorem*{maintheorem*}{Main Theorem}

\newcommand{\frakc}{\mathfrak{c}}

\newcommand{\frakp}{\mathfrak{p}}

\newcommand{\frakx}{\mathfrak{x}}
\newcommand{\fraky}{\mathfrak{y}}

\newcommand{\frakwmlur}{\mathfrak{wmlur}}
\newcommand{\fraksc}{\mathfrak{sc}}
\newcommand{\frakskk}{\mathfrak{skk}}

\newcommand{\eps}{\varepsilon}

\newcommand{\R}{\mathbb{R}}

\newcommand{\PP}{\mathbb{P}}

\newcommand{\aA}{\mathcal{A}}
\newcommand{\bB}{\mathcal{B}}

\newcommand{\dD}{\mathcal{D}}

\newcommand{\fF}{\mathcal{F}}

\newcommand{\mM}{\mathcal{M}}
\newcommand{\nN}{\mathcal{N}}
\newcommand{\pP}{\mathcal{P}}

\newcommand{\xX}{\mathcal{X}}
\newcommand{\yY}{\mathcal{Y}}

\DeclareMathOperator{\add}{add}

\DeclareMathOperator{\dom}{dom}

\DeclareMathOperator{\non}{non}

\newcommand{\sm}{\setminus}
\newcommand{\sub}{\subseteq}
\DeclareMathOperator{\supp}{supp}
\DeclareMathOperator{\spn}{span}

\newcommand{\seq}[2]{\big\langle#1\colon\ #2\big\rangle}
\newcommand{\seqn}[1]{\big\langle#1\colon\ n\io\big\rangle}
\newcommand{\seqN}[1]{\big\langle#1\colon\ N\io\big\rangle}

\newcommand{\clopen}[1]{\left[#1\right]}

\newcommand{\ctblsub}[1]{\left[#1\right]^\omega}
\newcommand{\finsub}[1]{\left[#1\right]^{<\omega}}

\newcommand{\iA}{\in\aA}

\newcommand{\io}{\in\omega}

\newcommand{\wo}{{\wp(\omega)}}
\newcommand{\bo}{{\beta\omega}}

\newcommand{\cso}{\ctblsub{\omega}}
\newcommand{\fso}{\finsub{\omega}}

\newcommand{\elli}{{\ell_\infty}}

\newcommand{\noproof}{\hfill$\Box$}

\newcommand{\forces}{\Vdash}
\newcommand{\Sacks}{\mathbb{S}_\kappa}
\renewcommand{\SS}{\mathbb{S}}
\DeclareMathOperator{\lev}{lev}

\newcommand{\trnorm}[1]{{\left|\kern-0.17ex\left|\kern-0.17ex\left|#1\right|\kern-0.17ex\right|\kern-0.17ex\right|}}

\begin{document}

\title{A small Banach space $C(K)$ without nice renormings}

\author[T.\ Manev]{Todor Manev}
\address{Faculty of Mahtematics and Informatics, Sofia University ``St. Kliment Ohridski'', 5, James Bourchier blvd., 1164 Sofia, Bulgaria}
\email{tmmanev@fmi.uni-sofia.bg}

\author[D.\ Sobota]{Damian Sobota}
\address{Kurt G\"odel Research Center, Department of Mathematics, Vienna University, Kolingasse 14--16, 1090 Vienna, Austria.}
\email{damian.sobota@univie.ac.at}
\urladdr{https://www.logic.univie.ac.at/~dsobota/}

\author[L.\ Zdomskyy]{Lyubomyr Zdomskyy}
\address{Institut f\"ur Diskrete Mathematik und Geometrie, Technische Universit\"at Wien, Wiedner Hauptstra\ss e 8-10/104, 1040 Wien, Austria.}
\email{lzdomsky@gmail.com}
\urladdr{https://dmg.tuwien.ac.at/zdomskyy/}

\thanks{T. Manev was partially supported by the Bulgarian National Science Fund under Grant No.~KP-06-H92/6 (December 08, 2025). D. Sobota was supported by the Austrian Science Fund (FWF), grants ESP~108-N and 10.55776/PAT4720925. L. Zdomskyy was supported by the Austrian Science Fund (FWF), grants  
10.55776/I5930 and 10.55776/PAT5730424.}

\begin{abstract}
We prove that consistently $\omega_1<\mathfrak{c}$ and there exists a compact space $K$ whose Banach space $C(K)$ of continuous real-valued functions is Grothendieck, has density $\omega_1$, and admits no renorming which is strictly convex or sequentially Kadets--Klee. 
\end{abstract}



\makeatletter
\@namedef{subjclassname@2020}{2020 Classification}
\makeatother
\subjclass[2020]{Primary: 03E35, 46B03, 46E15. Secondary: 03E75, 06E15, 46B20, 46B26.}
\renewcommand{\keywordsname}{Key words}
\keywords{Banach space of continuous functions, locally uniformly rotund norm, Kadets--Klee norm, strictly convex norm, renorming, density, Grothendieck space, Sacks forcing, consistency}

\maketitle

\section{Introduction\label{sec:intro}}

We start by recalling several standard definitions concerning norms on vector spaces. Let $(X,\|\cdot\|)$ be a normed space and put $S_X=\{x\in X\colon \|x\|=1\}$. We say that the norm $\|\cdot\|$ is:
\begin{itemize}[itemsep=1mm]
    \item \textit{strictly convex} or \textit{rotund} (in short, \textit{R}), if whenever $x,y\in S_X$ are such that $\big\|(x+y)/2\big\|=1$, then $x=y$,
    \item \textit{locally uniformly rotund} (\textit{LUR}) [resp. \textit{weakly locally uniformly rotund} (\textit{wLUR})], if whenever for $x\in S_X$ and $\seqn{y_n\in S_X}$ we have $\big\|\big(x+y_n\big)/2\big\|\to 1$, then $y_n\to x$ [resp., $y_n\xrightarrow{w}x$],
    \item \textit{midpoint locally uniformly rotund} (\textit{MLUR}) [resp. \textit{weakly midpoint locally uniformly rotund} (\textit{wMLUR})], if whenever for $x\in S_X$ and $\seqn{y_n\in X}$ we have $\big\|x\pm y_n\big\|\to 1$, then $y_n\to 0$ [resp., $y_n\xrightarrow{w} 0$], 
    \item \textit{Kadets--Klee} (\textit{KK}) [resp. \textit{sequentially Kadets--Klee} (\textit{SKK})], if the weak topology and the norm topology coincide on  $S_X$ [resp., if weakly convergent sequences in $S_X$ are norm convergent].
\end{itemize}

All the (in general non-reversible) relations between the above properties are presented in the following diagram of implications:
\vspace{-1cm}$$\xymatrixcolsep{3pc}\xymatrix{
\\
\text{KK}\ar@{=>}[r]&   \text{SKK} & \\ 
\text{LUR}\ar@{=>}[r]\ar@{=>}[d]\ar@{=>}[u]&    \text{MLUR}\ar@{=>}[r]\ar@{=>}[d]&\text{R}\\
\text{wLUR}\ar@{=>}[r]&     \text{wMLUR}\ar@{=>}[ur]\\
}$$
\vspace{1mm}

For more basic information concerning the above-defined properties of norms, see the standard monographs \cite{bible} and \cite{GMZ22}. 

In this paper we will investigate renorming properties of Banach spaces $C(K)$ of continuous real-valued functions on compact (Hausdorff) spaces $K$ endowed with the supremum norm. Those spaces have always constituted one of the central research lines in renorming theory, see e.g. 
\cite{Ale01,AB88,BKT06,GIST20,Hay90,Hay99,Hay01,HJNR00,HMO07,HR90,HZ89,Man24,Man26,ST10}. 
We are in particular interested in the issue how small those spaces can be in the case when they do \emph{not} admit any equivalent norms which are, e.g., strictly convex or LUR. The starting point for our research is the following question, attributed to A. Avil\'es (though we heard it also from W. Kubi\'s):

\begin{question*}\label{ques:main}
    Does there exist a Banach space of density $\omega_1$ which does not admit any equivalent strictly convex norm?
\end{question*}

We extend Question and also ask \textit{whether there exists a Banach space of density $\omega_1$ which does not admit any sequentially Kadets--Klee renorming}.

Recall that every separable Banach space admits an equivalent LUR norm and so a renorming which is also strictly convex and KK (see \cite{GMZ22}). Typical examples of Banach spaces without any strictly convex renormings are $\elli/c_0$ (Bourgain \cite{Bou80}) and $\elli(\omega_1)$ (Day \cite{Day55}); the former space has density \textit{continuum}, $\frakc=2^\omega$, and the latter $2^{\omega_1}$. The space $\elli$, which has density $\frakc$, does not admit any equivalent wMLUR norm (Alexandrov and Babev \cite{AB88}), as well as no Kadets--Klee renorming (Troyanski \cite{Tro72}), yet it has a strictly convex renorming (see \cite[Section 9.3.1]{GMZ22}). Note that all these spaces are $C(K)$-spaces in disguise: each space $\elli(\Gamma)$ is isometrically isomorphic to $C(\beta\Gamma)$, and $\elli/c_0$ is isometrically isomorphic to $C(\bo\sm\omega)$ (see Section \ref{sec:prelim}). Similar examples of Banach spaces $C(K)$ without renormings which are, e.g., wMLUR or SKK were given by Alexandrov \cite{Ale01}, though by the reasoning presented in \cite[Section 19.5]{KKLPS} all spaces studied by him are also of density at least $\frakc$.

Consistent positive answers to Question have been already known for some time, e.g. trivially under the assumption of the Continuum Hypothesis (\textsf{CH}; i.e. $\omega_1=\frakc$). A more sophisticated set-theoretic assumption implying an affirmative answer to the question is that of the existence of a \textit{Suslin line} $S$, that is, a linearly ordered topological space which satisfies the countable chain condition but is not separable; recall that such spaces exist, e.g., in models of \textsf{ZFC}+$\neg$\textsf{CH} satisfying also the Parametrized Diamond Principle $\Diamond(\non(\mM))$ such as the Miller model or the Sacks model, but do not exist if Martin's axiom (\textsf{MA}) and $\neg$\textsf{CH} hold (see \cite{Kun80,MHD03}). In fact, if $S$ is a Suslin line, then for the compact space $K_S$ obtained from $S$ by adding two end-points, by Haydon \textit{et al.} \cite{HJNR00}, the Banach space $C(K_S)$ admits no strictly convex renorming. Moreover, the space $K_S$ is always of weight $\omega_1$ and so $C(K_S)$ has density $\omega_1$, as required. Unfortunately, the space $C(K_S)$ does not answer our extended version of Question as, also by \cite{HJNR00}, it does admit an equivalent Kadets--Klee norm.

The aim of this paper is to consistently answer also the extended version of Question by providing a consistent example of a Banach space $C(K)$ of density $\omega_1<\frakc$ with no strictly convex nor sequentially Kadets--Klee renormings. For a trivial density reason, the constructed space $C(K)$ cannot contain any isomorphic copy of $\elli$, but it has an additional feature of being \textit{Grothendieck}, that is, every weak* convergent sequence in the dual space $C(K)^*$ is weakly convergent (see \cite{GK21,KKLPS}). The main result of the paper thus reads as follows:

\begin{maintheorem*}\label{thm:main}
    It is consistent with \textup{\textsf{ZFC}} that the inequality $\omega_1<\frakc$ holds and there exists a compact space $K$ such that the Banach space $C(K)$ is Grothendieck, has density $\omega_1$, and does not admit any equivalent norm which is strictly convex or sequentially Kadets--Klee. 
\end{maintheorem*}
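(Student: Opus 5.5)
We plan to work in the Sacks model, i.e.\ to force with the countable support iteration (or side-by-side product) $\Sacks$ of Sacks forcing of length $\kappa=\omega_2$ over a ground model of \textsf{CH}, so that $\frakc=\omega_2>\omega_1$ in the extension. The compact space $K$ will be the Stone space of a Boolean algebra $\aA\sub\wo$ of size $\omega_1$. Then $C(K)$ has density $|\aA|=\omega_1$. We intend to construct $\aA$ by a recursion of length $\omega_1$ in the extension, using a guessing principle available there. The natural candidate is the parametrized diamond $\Diamond(\non(\mM))$ or a similar Sacks-specific diamond, which holds in the Sacks model. Alternatively, we could build $\aA$ in the ground model under \textsf{CH} and show, via the Sacks property and fusion, that its relevant properties are preserved.

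The key steps are as follows. First, we isolate a combinatorial property $(\ast)$ of $\aA$ that implies the Grothendieck property. The intended $(\ast)$ is a Nikodym/Grothendieck-type property: for every disjoint sequence $\seqn{a_n}$ in $\aA$ there is an infinite $N$ with $\bigvee_{n\in N}a_n\in\aA$, or at least a subsequential interpolation property. The recursion then diagonalizes against the $\frakc$ many antichains, and the guessing principle captures them with only $\omega_1$ steps. Second, we isolate a property $(\ast\ast)$ that kills the renormings. We expect the route of Haydon--Jayne--Namioka--Rogers and Alexandrov: an equivalent strictly convex or SKK norm on $C(K)$ would yield a $\sigma$-isolated (or $\sigma$-discrete-type) network, or a countable decomposition of $K$ with a controlled structure. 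We will build $\aA$ so that $K$ is ccc-like and carries no such decomposition, in the style of a Suslin-type or Kunen-type construction.

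For the SKK part we plan to use Troyanski/Alexandrov-type arguments. If $C(K)$ has an SKK (respectively strictly convex) renorming, then we can find countably many ``slices'' separating points of $K$ in a way that violates $(\ast\ast)$. In each case the argument reduces to finding an uncountable family in $\aA$ whose combinatorics is incompatible with that countable structure.

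The main obstacle is reconciling the two requirements in a model where $\frakc>\omega_1$. The Grothendieck property pushes toward richness, with many suprema of subsequences, while the non-renormability pushes toward Suslin-like rigidity. Moreover, every SKK or strictly convex renorming of $C(K)$ must be handled, yet the norms live in the extension and there are $\frakc$ many of them. We would try first to show that any such norm is determined, up to the needed combinatorial witness, by $\omega_1$ much information. That information can then be guessed by $\Diamond(\non(\mM))$ at stationarily many stages of the recursion, where we add elements to $\aA$ that destroy the witness. Verifying that this guessing suffices, presumably using that every new real in the Sacks model lies in a ground-model-coded meager or small set, is where we expect most of the work to lie.
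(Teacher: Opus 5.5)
There is a genuine gap. Several of the concrete choices you commit to are incompatible with the statement.

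\textbf{The space $K$ is separable.} If $\aA\sub\wo$ is a subalgebra of the power set of $\omega$, then the ultrafilters $u_n=\{A\iA\colon n\in A\}$, $n\io$, are dense in $K=St(\aA)$. Hence $\trnorm{f}=\|f\|_\infty+\big(\sum_{n}2^{-n}f(u_n)^2\big)^{1/2}$ is an equivalent strictly convex norm on $C(K)$. Compare Corollary~\ref{cor:main}, where exactly this happens for $\wo^V$. So $K$ must be non-separable. The paper uses $(\wo/Fin)^V$, whose property (CNC) rules out ccc and supplies the element $C$ described below.

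\textbf{Your candidate $(\ast)$ cannot hold at size $\omega_1<\frakc$.} (SC) implies (SI). Suppose an infinite $\aA$ has (SI). Apply it to a fixed infinite antichain $\langle A_n\colon n\io\rangle$ and to each member $M_\xi$ of an almost disjoint family of size $\frakc$, obtaining $A^\xi\iA$ and infinite $M'_\xi\sub M_\xi$. Picking $n\in M'_\xi\sm M'_\eta$ gives $A_n\le A^\xi$ and $A_n\wedge A^\eta=0_\aA$, so the $A^\xi$ are pairwise distinct and $|\aA|\ge\frakc$. Likewise (I) must fail for $\aA$ in $V[G]$. Otherwise $St(\aA)$ would be an infinite compact F-space, hence would contain a copy of $\beta\omega$, which is impossible in weight $\omega_1<\frakc$. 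So neither a diamond recursion nor preservation of such a property works. The Grothendieck clause must come from Brech's preservation theorem ($C(St(\wo^V))$ stays Grothendieck after $\Sacks$), plus passing to the quotient $\wo/Fin$.

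\textbf{The Suslin-type route fails for SKK and Grothendieck.} For a Suslin line, $C(K_S)$ has a Kadets--Klee renorming, and $K_S$ has non-trivial convergent sequences (cf.\ Remark~\ref{remark:suslin_sbs_Sacks}).

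\textbf{The missing idea: defeat all $\frakc$ norms of $V[G]$ without guessing them.} Given a name for a norm, the paper considers the oscillation $d(o,x,Z)$. This is the sup minus the inf of $\trnorm{\varphi(z)}$ over all $z\succeq x$ whose support misses $Z$, where $\varphi(z)=\chi_{\clopen{l(z)}}-\chi_{\clopen{r(z)}}$.
\begin{enumerate}
\item A reflection trick (Lemma~\ref{lemma:basic}\ref{lemma:basic:exists}) shows that enlarging $x$ halves this oscillation.
\item Lemma~\ref{lemma:forcing_renorming} performs this halving successively below each of the finitely many $s|\sigma$, $\sigma\in\lev(F,N,s)$, deciding the witnesses as ground-model objects.
\item A fusion then yields $s^*$ and ground-model finite antichains $A_{N,k},B_{N,k}$ and indices $i_{N,k}$. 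The condition $s^*$ forces the oscillation to tend to $0$ along suitable $k_N$.
\item Only then is (I) applied, in $V$, to get $x=(A,B)$ extending every $x_{N,k}$ and avoiding every $Z_{N,k}$. (CNC) then gives $C\neq0_\aA$ disjoint from all of these.
\item Squeezing between $m$ and $M$ gives $\trnorm{\varphi(x)}=\trnorm{\varphi(x)\pm\chi_{\clopen{C}}}$, contradicting strict convexity. Adding $C_{i_{N,k_N+1}}$ instead of $C$ gives a weakly convergent but norm non-convergent sequence on the sphere, contradicting SKK.
\end{enumerate}
Your ``alternative'' (a ground-model algebra under \textsf{CH} plus fusion) points this way. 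But what transfers to $V[G]$ is the non-existence of renormings, via this oscillation argument, not the combinatorial properties themselves.
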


To establish Main Theorem, we apply the set-theoretic method of forcing. The notion used by us is the side-by-side product $\SS_{\omega_2}$ of $\omega_2$ many copies of the Sacks forcing and the obtained compact space $K$ is the Stone space of the ground model Boolean algebra $\wo/Fin$ (see Corollary \ref{cor:main2} for details). Note that the space $C(St(\wo/Fin))$ in the ground model has all of the properties mentioned in Main Theorem, except for the density which in the ground model is $\frakc$. Consequently, our result is relevant not only for Banach space theoretic investigations, but also presents some new preservation properties of the forcing $\SS_{\omega_2}$: it preserves the lack of strictly convex and of SKK renormings of the Banach space of continuous functions on the Stone space of the ground model Boolean algebra $\wo/Fin$, or, more generally, of ground model Boolean algebras with the Interpolation Property and the Countable Non-Completeness Property (see Theorem \ref{theorem:main_SC}). Note that similar results have already been obtained for the preservation of the Grothendieck and Nikodym properties of ground model $\sigma$-complete Boolean algebras by various classes of forcings (see \cite{Bre06,SZNik,SZForExt,SZAdding}) or, quite related, for the preservation of the lack of non-trivial convergent sequences in ground model extremally disconnected compact spaces by the random and Laver forcings (see \cite{Dow24,DF07,SZAdding}).

\section*{Acknowledgements}

The authors would like to thank Arturo Mart\'inez-Celis for explaining to them in detail why the Diamond Principle $\Diamond(\non(\mM))$ holds in any side-by-side Sacks extension and so why there are Suslin trees in those models (see Remark \ref{remark:suslin_sbs_Sacks}).

\section{Preliminaries\label{sec:prelim}}

By $\omega$ we denote the first infinite (countable) cardinal number, by $\omega_1$ and $\omega_2$ the first and second uncountable cardinal numbers, respectively, and by $\frakc$ the cardinality of the real line $\R$ (i.e. the \textit{continuum}, $\frakc=2^\omega$). 

For a set $X$, by $|X|$ we denote its cardinality, and by $\wp(X)$ and $\finsub{X}$ the collections of all sets of $X$ and of all finite sets of $X$, respectively. We will usually write $Fin=\fso$. For a subset $A\sub X$, $\chi_A\colon X\to\{0,1\}$ denotes the characteristic function of $A$ in $X$. We also say that two sequences $\seq{a_i}{i\in I}$ and $\seq{b_j}{j\in J}$ of elements of $X$ are \textit{disjoint} if $\big\{a_i\colon i\in I\big\}\cap\big\{b_j\colon j\in J\big\}=\emptyset$.

For a Boolean algebra $\aA$, by $\wedge$, $\vee$, $\sm$, $0_\aA$, $1_\aA$, $\le$ we denote its operations, zero and unit elements, and order, respectively. A collection $\seq{A_i}{i\in I}$ of non-zero elements of $\aA$ is an \textit{antichain} if $A_i\wedge A_j=0_\aA$ for all $i\neq j\in I$. Two antichains $\seq{A_i}{i\in I}$ and $\seq{B_j}{j\in J}$ in $\aA$ are \textit{disjoint} if $A_i\wedge B_j=0_\aA$ for all $i\in I$, $j\in J$. Recall that $\aA$ is \textit{$\sigma$-complete} if every countable antichain $\seqn{A_n}$ has the supremum $\bigvee_{n\io}A_n$ in $\aA$.

The Stone space of a Boolean algebra $\aA$ is denoted by $St(\aA)$. For each element $A\in\aA$, by $\clopen{A}$ we denote the corresponding clopen subset of $St(\aA)$. Recall that $St(\aA)$ is a totally disconnected compact Hausdorff space and the family $\{\clopen{A}\colon A\in\aA\}$ constitutes a base of the topology of $St(\aA)$. Note also that the Stone spaces $St(\wo)$ and $St(\wo/Fin)$ are usually identified with the \v{C}ech--Stone compactification $\bo$ of the discrete space $\omega$ and its remainder $\omega^*=\bo\sm\omega$, respectively. 

For a topological space $X$, by $w(X)$ and $d(X)$ we denote its weight and density, respectively. If $K$ is a compact (Hausdorff) space, then $C(K)$ denotes the Banach space of all continuous real-valued functions endowed by default with the supremum norm $\|\cdot\|_\infty$. Recall that by the Riesz--Markov--Kakutani representation theorem the dual space $C(K)^*$ is isometrically isomorphic to the Banach space of all signed Radon measures on $K$ endowed with the variation norm. Note also that we have $w(K)=d(C(K))$ and so, for a Boolean algebra $\aA$, $|\aA|=w(St(\aA))=d\big(C(St(\aA))\big)$. For every $f\in C(K)$, we set $\supp(f)=\overline{\{x\in K\colon f(x)\neq 0\}}$. 

The Banach spaces $c_0$, $\elli$, and $\elli(\Gamma)$ have their usual (real-valued) definitions.

Let $\pP$ be a property of norms of Banach spaces, e.g. $\pP$=strictly convex or $\pP$=LUR. For a Banach space $(X,\|\cdot\|)$, we say that $X$ \textit{has a $\pP$ renorming}, if there is an equivalent norm $\trnorm{\,\cdot\,}$ on $X$ which has property $\pP$.

\subsection{The side-by-side Sacks forcing}
 
$V$ always denotes a model of \textsf{ZFC}. Throughout the whole paper, except for Section \ref{sec:adding}, $\kappa$ denotes a regular cardinal number $\ge\omega_2$.

In our forcing arguments we will mostly use the side-by-side Sacks forcing. Our nomenclature follows that of the paper of Baumgartner \cite{Bau85}. By $\SS$ we denote the \textit{Sacks} forcing (or the \textit{perfect-set} forcing), i.e. the collection of all perfect non-empty subtrees of the full tree $2^{<\omega}$ (i.e. $s\in\SS$ if and only if for all $v\in s$ there are incomparable $w,u\in s$ extending $v$), together with the ordering $\le=\sub$. By $\Sacks$ we denote the \textit{side-by-side} product of $\kappa$ many Sacks forcings $\SS$, i.e. the collection of all functions $s\colon\dom(s)\to\SS$ with the domain $\dom(s)$ being a countable subset of $\kappa$, endowed with the ordering $\le$ defined as follows: for $s,s'\in\Sacks$, $s'\le s$ if $\dom(s)\sub\dom(s')$ and $s'(\alpha)\le s(\alpha)$ for every $\alpha\in\dom(s)$. 

If $s\in\mathbb{S}$ and $v\in s$, then $s|v=\big\{w\in s\colon\ w\sub v\text{ or }v\sub w\big\}\in\SS$. If $n\io$, then $\lev(n,s)$ denotes the \textit{$n$-th forking level} of $s$, i.e. $v\in\lev(n,s)$ if and only if there are exactly $n$ vertices $w\subsetneq v$ such that $s$ splits at $w$, and no $w\subsetneq v$ has this property as well. 
Note that $|\lev(n,s)|=2^n$.

Let $s,s'\in\Sacks,F\in\finsub{\dom(s)}$, and $n\io$. We put $\lev(F,n,s)=\big\{\sigma\colon\ \dom(\sigma)=F\ \&\ \forall\alpha\in F\colon\ \sigma(\alpha)\in \lev(n,s(\alpha))\big\}$. Note that $|\lev(F,n,s)|=2^{n|F|}$. We write $s'\le_{F,n}s$, if $s'\le s$ and $\lev(F,n,s')=\lev(F,n,s)$. If $\sigma\colon F\to 2^{<\omega}$ is such that $\sigma(\alpha)\in s(\alpha)$ for every $\alpha\in F$, then we write $s|\sigma$ for the condition defined as $(s|\sigma)(\alpha)=s(\alpha)$ for $\alpha\in\dom(s)\sm F$ and $(s|\sigma)(\alpha)=s(\alpha)|\sigma(\alpha)$ for $\alpha\in F$.

If $G$ is an $\Sacks$-generic filter over $V$ and $\tau\in V$ is an $\Sacks$-name, then by $(\tau)_G$ we denote the interpretation of $\tau$ in $V[G]$. We will also use the convention that if $x$ is an element of the ground model $V$ (resp. the extension $V[G]$), then $\check{x}$ (resp. $\dot{x}$) is an $\Sacks$-name such that $(\check{x})_G=x$ (resp. $(\dot{x})_G=x$).

The following lemma is standard, cf. \cite[Lemmas 1.5--1.8]{Bau85}.

\begin{lemma}\label{lemma:baumgartner}
Let $s\in\Sacks$, $N\io$, and $F\in\finsub{\dom(s)}$.
\begin{enumerate}[(a),itemsep=1mm]
	\item\label{lemma:baumgartner:antichain} $\big\{s|\sigma\colon\ \sigma\in \lev(F,N,s)\big\}$ is an antichain in $\Sacks$ and $s=\bigcup_{\sigma\in \lev(F,N,s)}s|\sigma$.
	\item\label{lemma:baumgartner:dense} If $\sigma\in \lev(F,N,s)$ and $p\le s|\sigma$, then there exists $q\le_{F,N}s$ such that $q|\sigma=p$.
	\item\label{lemma:baumgartner:open_dense} If $D\sub\Sacks$ is open dense below $s$, then there exists $q\le_{F,N}s$ such that $q|\sigma\in D$ for every $\sigma\in \lev(F,N,s)$.
\end{enumerate}\noproof
\end{lemma}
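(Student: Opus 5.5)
The plan is to prove the three parts in order, working coordinatewise on the finitely many coordinates in $F$ and noting that the coordinates outside $F$ play no role in (a) and (b).

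For (a), first record the single-tree facts. For a perfect tree $t\in\SS$, the nodes in $\lev(N,t)$ are pairwise incomparable: if $v\subsetneq w$ were both in $\lev(N,t)$, then $v$ itself would be a splitting node strictly below $w$. This adds a splitting node below $w$ beyond those below $v$, contradicting that both have exactly $N$ splitting nodes strictly below them. Hence the trees $t|v$, $v\in\lev(N,t)$, are pairwise incompatible. Any common extension would be a perfect tree containing branches through both $v$ and $v'$, which is impossible because $t|v\cap t|v'$ is a finite set of initial segments. Moreover every infinite branch of $t$ passes through exactly one $v\in\lev(N,t)$, so $t=\bigcup_{v}t|v$. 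To see this, follow the branch up to just above its $N$-th splitting node; since $t$ is perfect, the minimal node of $\lev(N,t)$ on it exists. For $\sigma\neq\sigma'$ in $\lev(F,N,s)$, pick $\alpha\in F$ where they differ; then $(s|\sigma)(\alpha)$ and $(s|\sigma')(\alpha)$ are incompatible, so $s|\sigma\perp s|\sigma'$. The union statement is read coordinatewise: $s(\alpha)=\bigcup_\sigma (s|\sigma)(\alpha)$ for each $\alpha$.

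For (b), given $p\le s|\sigma$, define $q$ with $\dom(q)=\dom(p)$. Put $q(\alpha)=p(\alpha)$ for $\alpha\notin F$. For $\alpha\in F$, put $q(\alpha)=p(\alpha)\cup\bigcup\{s(\alpha)|v\colon v\in\lev(N,s(\alpha)),\ v\neq\sigma(\alpha)\}$. This is a perfect tree contained in $s(\alpha)$. Its $N$-th forking level is still $\lev(N,s(\alpha))$ for the following reason. Since $p(\alpha)\sub s(\alpha)|\sigma(\alpha)$ is perfect, it contains $\sigma(\alpha)$ and all its initial segments. Also, the splitting below level $N$ is unchanged, because every $v\in\lev(N,s(\alpha))$ is still present with a perfect tree above it. Then $q|\sigma=p$, since restricting to $\sigma(\alpha)$ on each $\alpha\in F$ recovers $p(\alpha)$, and off $F$ nothing changes. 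One subtlety needs care: the definition of $q|\sigma$ uses $q$ also on $\dom(p)\sm\dom(s)$, so I set $q=p$ there. Then $q\le s$ holds coordinatewise, using $p(\alpha)\le s(\alpha)$ for $\alpha\in\dom(s)\sm F$.

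For (c), enumerate $\lev(F,N,s)=\{\sigma_1,\dots,\sigma_k\}$ and build $s=q_0\ge_{F,N}q_1\ge_{F,N}\dots\ge_{F,N}q_k$. At step $i$, first note that $\lev(F,N,q_{i-1})=\lev(F,N,s)$. Since $q_{i-1}|\sigma_i\le s$ and $D$ is open dense below $s$, choose $p_i\le q_{i-1}|\sigma_i$ with $p_i\in D$. Then apply (b) to $q_{i-1}$ to get $q_i\le_{F,N}q_{i-1}$ with $q_i|\sigma_i=p_i$. Later steps only shrink the parts above the other $\sigma_j$, and by the construction in (b) the part above $\sigma_i$ is untouched by them on $F$. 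Off $F$, however, the coordinates do shrink. So at the end we get $q_k|\sigma_i\le p_i$, and $q_k|\sigma_i\in D$ because $D$ is open. Finally $q=q_k\le_{F,N}s$, since $\le_{F,N}$ is transitive.

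The main obstacle is bookkeeping rather than an idea. One must verify carefully in (b) that the amalgamated tree keeps exactly the same $N$-th forking level, with no new or lost splitting nodes below level $N$. In (c) one must verify that shrinking off-$F$ coordinates in later steps is harmless, and this is exactly where the openness of $D$ is used.
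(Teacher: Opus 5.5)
Your proof follows the standard amalgamation-and-iteration argument behind Baumgartner's Lemmas 1.5--1.8, which the paper cites without proof, and its overall structure works. Two of your intermediate justifications are wrong as stated, although both conclusions survive with simpler reasons.

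\textbf{Part (a).} You justify incomparability of the nodes of $\lev(N,t)$ by saying that $v\subsetneq w$ would make $v$ a splitting node below $w$. Under the paper's definition, a node of $\lev(N,t)$ is a \emph{minimal} node with exactly $N$ splitting nodes strictly below it, and it need not split itself. The correct reason is the minimality clause: if $v\subsetneq w$ both had exactly $N$ splitting predecessors, then $w$ would not be minimal.

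The convention matters. If $\lev(N,t)$ consisted of the splitting nodes with $N$ splitting predecessors, part (b) would be false. Take $p(\alpha)=s(\alpha)|\sigma(\alpha)^\frown 0$. Then any $q$ with $q|\sigma=p$ fails to split at $\sigma(\alpha)$, so $\sigma(\alpha)\notin\lev(N,q(\alpha))$. Your proof of (b) correctly uses the paper's convention, since you only need each $v\in\lev(N,s(\alpha))$ to survive with a perfect tree above it. Part (a) must use the same convention.

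\textbf{Part (c).} Your claim that later steps leave the part above $\sigma_i$ untouched on $F$ is false. Suppose $j>i$ and $\sigma_j(\alpha)=\sigma_i(\alpha)$ for some $\alpha\in F$; this happens whenever $|F|\ge 2$ and $N\ge 1$. Then step $j$ replaces $q_{j-1}(\alpha)|\sigma_i(\alpha)$ by $p_j(\alpha)$.

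Your conclusion does not need that claim. Since $q_k\le q_i$, $\sigma_i\in\lev(F,N,q_k)$, and $q\mapsto q|\sigma_i$ is monotone, you get $q_k|\sigma_i\le q_i|\sigma_i=p_i\in D$. Openness of $D$ then gives $q_k|\sigma_i\in D$. So openness absorbs all later shrinking, on $F$ as well as off $F$, not only the off-$F$ shrinking you identified.
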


\section{Technical lemmas}

In this section we will introduce some auxiliary notions and facts concerning Boolean algebras, norms on $C(K)$-spaces, and the side-by-side Sacks forcing.

\subsection{Auxiliary notions for Boolean algebras\label{sec:aux_ba}}

Fix an infinite Boolean algebra $\aA$ and an antichain $\seqn{C_n}$ in $\aA$. For $A\iA$ set
\[\alpha(A)=\big|\big\{n\io\colon A\wedge C_n=0_\aA\big\}\big|.\]
Put
\[\xX=\big\{(A,B)\colon\ A,B\iA,\ A\wedge B=0_\aA,\ \alpha(A\vee B)=\infty\big\}.\]
Let $o=\big(0_\aA,0_\aA)$ and note that $o\in\xX$. For $x=(A,B)\in\xX$ let
\[l(x)=A,\quad r(x)=B,\quad\text{and}\quad\supp(x)=A\vee B=l(x)\vee r(x).\]
For $x,y\in\xX$ set
\[x\oplus y=\big(l(x)\vee l(y), r(x)\vee r(y)\big).\]
For every $x\in\xX$ let also
\[\dD(x)=\big\{Z\iA\colon Z\wedge\supp(x)=0_\aA\big\}.\]
Moreover, for all $x,u\in\xX$ we write $u\preceq x$ if $l(u)\le l(x)$ and $r(u)\le r(x)$.
Finally, if $x\in\xX$ and $Z\in\dD(x)$, then set
\[\fF\big(x,Z\big)=\big\{z\in\xX\colon\ Z\in\dD(z),\ x\preceq z\big\};\]
note that $x\in\fF(x,Z)$.

\subsection{Auxiliary notions for norms\label{sec:aux_norms}}

Fix the objects as in Section \ref{sec:aux_ba}. For each $x\in\xX$ set 
\[\varphi(x)=\chi_{\clopen{l(x)}}-\chi_{\clopen{r(x)}},\]
and note that $\varphi(x)\in C(St(\aA))$ and $\clopen{\supp(x)}=\supp(\varphi(x))$.

Let $\trnorm{\,\cdot\,}$ be an equivalent norm on $C(St(\aA))$, that is, $a\trnorm{\,\cdot\,}\le\|\cdot\|_\infty\le b\trnorm{\,\cdot\,}$ for some $a,b>0$. For every $u\preceq x\in\xX$ and $Z\in\dD(x)$ define:
\[m(u,x,Z\big)=\inf_{z\in\fF(x,Z)}\trnorm{\varphi(u)-\varphi(z)},\]
\[M(u,x,Z\big)=\sup_{z\in\fF(x,Z)}\trnorm{\varphi(u)-\varphi(z)},\]
\[d(u,x,Z\big)=M(u,x,Z)-m(u,x,Z).\]
We will say that the triple $(m,M,d)$ is \textit{associated} with the norm $\trnorm{\,\cdot\,}$.

\medskip

We will frequently use the following standard lemma (cf. \cite{Ale01,AB88}); we provide its (partial) proof for the convenience of the reader.

\begin{lemma}\label{lemma:basic}
    Let $x\in\xX$, $u\preceq x$, and $Z\in\dD(x)$. Then,
    \begin{enumerate}[(i),itemsep=1mm]
        \item\label{lemma:basic:bigger_Z} If $Z'\in\dD(x)$ and $Z\le Z'$, then $d(u,x,Z')\le d(u,x,Z)$.
        \item\label{lemma:basic:bigger_x} If $x'\in\fF(x,Z)$, then $d(u,x',Z)\le d(u,x,Z)$.
        \item\label{lemma:basic:exists} For any finite set $\yY\sub\xX$ such that $y\preceq x$ for all $y\in\yY$, there is $z\in\fF(x,Z)$ such that $d(y,z,Z)\le d(y,x,Z)/2$ for all $y\in\yY$.
        \item\label{lemma:basic:addition} If $z\in\xX$ is such that $\supp(z)\cap\supp(x)=0_\aA$ and $Z\in\dD(x\oplus z)$, then $d(u\oplus z,x\oplus z,Z)\le d(u,x,Z)$.
    \end{enumerate}
\end{lemma}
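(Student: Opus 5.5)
The whole lemma rests on one observation: enlarging $Z$ or replacing $x$ by some $x'\in\fF(x,Z)$ shrinks the index set $\fF$. So (i) and (ii) are monotonicity of $\sup-\inf$ under passing to a subfamily, (iii) is an iterative halving argument, and (iv) is a translation (an injection of index sets that preserves the relevant differences).

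For (i), note that if $Z\le Z'$ and $Z'\wedge\supp(z)=0_\aA$, then $Z\wedge\supp(z)=0_\aA$. Hence $\fF(x,Z')\sub\fF(x,Z)$. Taking the supremum over a smaller family gives a smaller $M$, and taking the infimum gives a larger $m$, so $d(u,x,Z')\le d(u,x,Z)$.

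For (ii), let $x'\in\fF(x,Z)$, so that $x\preceq x'$ and $Z\in\dD(x')$. Then $u\preceq x'$ and, by transitivity of $\preceq$, $\fF(x',Z)\sub\fF(x,Z)$. The same monotonicity argument gives the claim.

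For (iv), I use the map $z'\mapsto z'\oplus z$ from $\fF(x,Z)$ into $\fF(x\oplus z,Z)$. I expect this to be the delicate step, because the map must be shown to be onto modulo something that does not change the norms. I would argue as follows.
\begin{enumerate}[(1)]
\item Take $w\in\fF(x\oplus z,Z)$. Set $w'=(l(w)\sm r(z),\, r(w)\sm l(z))$.
\item Since $z\preceq w$ and $l(w)\wedge r(w)=0_\aA$, we get $w=w'\oplus z$.
\item Also $x\preceq w'$: since $x$ and $z$ have disjoint supports, $l(x)\le l(w)$ and $l(x)\wedge r(z)=0_\aA$, so $l(x)\le l(w)\sm r(z)$; similarly $r(x)\le r(w')$.
\item $w'\in\xX$ and $Z\in\dD(w')$, because $\supp(w')\le\supp(w)$, which is disjoint from $Z$, and $\alpha(\supp(w'))\ge\alpha(\supp(w))=\infty$ since $\alpha$ is antitone.
\end{enumerate}
Conversely, for $z'\in\fF(x,Z)$ the pair $z'\oplus z$ lies in $\xX$ and in $\fF(x\oplus z,Z)$. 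This requires the sides to stay disjoint, and I would check it as follows. By the hypothesis of (iv), $Z\in\dD(x\oplus z)$, so $\supp(z)\wedge Z=0_\aA$; moreover $\alpha(\supp(z'\oplus z))=\infty$ because this support contains $\supp(z')$. The remaining issue is $l(z')\wedge r(z)=0_\aA$, which may fail since $z'$ can overlap $\supp(z)$. To handle this, replace $z'$ by $z''=(l(z')\sm\supp(z),\, r(z')\sm\supp(z))$. This still lies above $x$, because $\supp(x)\wedge\supp(z)=0_\aA$, and it satisfies $z''\oplus z\in\fF(x\oplus z,Z)$.

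The key identity for (iv) is $\varphi(u\oplus z)-\varphi(z''\oplus z)=\varphi(u)-\varphi(z'')$, where the $\varphi(z)$ terms cancel by disjointness. Using only the $w'$ construction, every value $\trnorm{\varphi(u\oplus z)-\varphi(w)}$ with $w\in\fF(x\oplus z,Z)$ equals $\trnorm{\varphi(u)-\varphi(w')}$ with $w'\in\fF(x,Z)$. This gives $M(u\oplus z,x\oplus z,Z)\le M(u,x,Z)$ and $m(u\oplus z,x\oplus z,Z)\ge m(u,x,Z)$, hence the inequality. So only the direction from $w$ to $w'$ is needed, and the converse discussion above can be dropped.

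For (iii) the plan is to treat one $y$ at a time and then iterate, using (ii).
\begin{enumerate}[(1)]
\item Fix $y\in\yY$ and set $c=(m+M)/2$, where $m=m(y,x,Z)$ and $M=M(y,x,Z)$.
\item If some $z\in\fF(x,Z)$ has $\trnorm{\varphi(y)-\varphi(z)}\ge c$, I would like every $z'\in\fF(z,Z)$ to also satisfy a lower bound, so that $m(y,z,Z)\ge c$ and hence $d(y,z,Z)\le M-c=d/2$. This does not follow automatically. The honest route is to choose $z$ with value near $m$ and then use an infimum trick instead.
\item Concretely, take $z\in\fF(x,Z)$ with $M(y,z,Z)\le m(y,x,Z)+d/2+\varepsilon$, if such $z$ exists. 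Then $d(y,z,Z)\le M(y,z,Z)-m(y,x,Z)\le d/2+\varepsilon$, using $m(y,z,Z)\ge m(y,x,Z)$ by (ii).
\item If no such $z$ exists, then every $z$ has $M(y,z,Z)>m+d/2$, and it is not yet clear how to reach $d/2$ in this case.
\end{enumerate}
This is the main obstacle, and I would first look at the standard argument in \cite{Ale01,AB88}. One option is to use $\inf_{z}M(y,z,Z)$: choosing $z$ to nearly attain it gives, for every $z'\in\fF(z,Z)$, $M(y,z',Z)\ge$ that infimum. I would then combine this with a convexity or triangle-inequality step to deduce that $m(y,z,Z)$ is close to $M(y,z,Z)$. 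If the argument only yields $d/2+\varepsilon$, I would need to absorb $\varepsilon$, possibly by first reducing to a strict inequality or by treating $d=0$ separately. Finally, I would apply the one-$y$ step successively to each $y\in\yY$, passing to extensions $z_1\preceq z_2\preceq\cdots$. By (ii), the bounds already obtained for earlier $y$'s are preserved, and each final bound is at most half of the original $d(y,x,Z)$, again using (ii).
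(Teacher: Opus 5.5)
\textbf{Verdict: genuine gap in (iii).} Your items (i), (ii), and your reduction of (iii) to a single $y$ (iterating and invoking (ii)) are fine. The single-$y$ case of (iii), which is the only part of the lemma with real content, is left open.

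\textbf{What (iii) is missing.} The missing idea is that $\fF(v,Z)$ is closed under reflection through $v$. If $v\in\xX$, $Z\in\dD(v)$ and $w\in\fF(v,Z)$, then $2\varphi(v)-\varphi(w)=\varphi(\tilde w)$, where
\[\tilde w=\bigl(l(v)\vee(r(w)\sm r(v)),\ r(v)\vee(l(w)\sm l(v))\bigr).\]
This pair has the same support as $w$, so $\tilde w\in\fF(v,Z)$. Write $\varphi(v)-\varphi(y)$ as the average of $\varphi(\tilde w)-\varphi(y)$ and $\varphi(w)-\varphi(y)$, with $w$ nearly attaining $m(y,v,Z)$. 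This gives
\[\trnorm{\varphi(v)-\varphi(y)}\le\tfrac12\bigl(m(y,v,Z)+M(y,v,Z)\bigr)\qquad\text{for } y\preceq v,\]
that is, $m(y,v,Z)\ge 2\trnorm{\varphi(v)-\varphi(y)}-M(y,v,Z)$. This is exactly the uniform lower bound over $\fF(z,Z)$ you were after in your step (2).

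It also shows you should pick $z$ \emph{nearly maximizing} the distance, not nearly minimizing it. Suppose $z\in\fF(x,Z)$ has $\trnorm{\varphi(z)-\varphi(y)}\ge M(y,x,Z)-\eps$. By (ii), $M(y,z,Z)\le M(y,x,Z)$. Hence $m(y,z,Z)\ge M(y,x,Z)-2\eps$ and $d(y,z,Z)\le 2\eps$. Now take $\eps=d(y,x,Z)/4$, or $z=x$ if $d(y,x,Z)=0$. Alternatively, as in the paper, require $\trnorm{\varphi(z)-\varphi(y)}\ge\bigl(3M(y,x,Z)+\trnorm{\varphi(x)-\varphi(y)}\bigr)/4$. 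Either way you get $d(y,z,Z)\le d(y,x,Z)/2$ exactly, so the problem of absorbing $\eps$ does not arise. Your suggested route through $\inf_z M(y,z,Z)$ would need this same inequality to get any lower bound on $m$.

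\textbf{A slip in (iv).} Since $r(z)\le r(w)$ and $l(w)\wedge r(w)=0_\aA$, you have $l(w)\sm r(z)=l(w)$, and likewise $r(w)\sm l(z)=r(w)$. So your $w'$ is just $w$. It still overlaps $\supp(z)$, so in general $\varphi(w'\oplus z)=\varphi(w)\neq\varphi(w')+\varphi(z)$. The claimed identity $\varphi(u\oplus z)-\varphi(w)=\varphi(u)-\varphi(w')$ therefore fails.

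The correct choice is $w'=\bigl(l(w)\sm l(z),\ r(w)\sm r(z)\bigr)$, i.e.\ delete $\supp(z)$ from both sides, as you did for $z''$. Then:
\begin{itemize}
\item $x\preceq w'$, because $\supp(x)\wedge\supp(z)=0_\aA$;
\item $w'\in\fF(x,Z)$, as in your step (4);
\item $\supp(w')\wedge\supp(z)=0_\aA$ and $\varphi(w)=\varphi(w')+\varphi(z)$.
\end{itemize}
With this, your one-directional comparison of value sets proves (iv).

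In the converse direction you discarded, the claim that $\alpha(\supp(z'\oplus z))=\infty$ because the support contains $\supp(z')$ is backwards, since $\alpha$ is antitone. You were right to drop that part.
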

\begin{proof}
    Items (i), (ii), and (iv) are immediate, so we only prove item (iii). For this we will first show that whenever $u \preceq v \in \xX$ and $Z \in \dD(v)$, we have
    \[\tag{$*$}\trnorm{\varphi(v) - \varphi(u)} \le \frac{M(u,v,Z) + m(u, v, Z)}{2}.\]
    To this end, let $\eps > 0$ and find $w \in \fF(v, Z)$ such that
    \[\trnorm{\varphi(w) - \varphi(u)} \le m(u,v,Z) + \eps.\]
    Notice that
    \[2 \varphi(v) - \varphi(w) = \varphi\Bigl(\bigl(l(v) \vee (r(w) \sm r(v))\, ,\, r(v) \vee (l(w) \sm l(v)) \bigr)\Bigr).\]
    In particular, the argument of the right-hand side is an element of $\fF(v, Z)$ and thus
    \[\trnorm{\bigl(2 \varphi(v) - \varphi(w)\bigr) - \varphi(u)} \le M(u,v,Z).\]
    We now have
    \[\trnorm{\varphi(v) - \varphi(u)} \le \frac{\trnorm{\bigl(2 \varphi(v) - \varphi(w)\bigr) - \varphi(u)} + \trnorm{\varphi(w) - \varphi(u)}}{2}<\]
    \[< \frac{M(u,v,Z) + m(u,v,Z) + \eps}{2},\]
    which proves ($*$), as $\eps>0$ was arbitrary.
    
    Returning to the proof of (iii), consider first the case where $\yY$ is a singleton, i.e. $\yY = \{y\}$ for some $y\in\xX$. We can find $z \in \fF(x,Z)$ such that
    \[\trnorm{\varphi(z) - \varphi(y)} \ge \frac{3 M(y,x,Z) + \trnorm{\varphi(x) - \varphi(y)}}{4}.\]
    Combining ($*$), the latter inequality, and (ii), we obtain
    \[m(y,z,Z) \ge 2 \trnorm{\varphi(z) - \varphi(y)} - M(y,z,Z)\ge\]
    \[\ge \frac{3 M(y,x,Z) + \trnorm{\varphi(x) - \varphi(y)}}{2} - M(y,z,Z)\ge\]
    \[\ge \frac{3 M(y,x,Z) + \trnorm{\varphi(x) - \varphi(y)}}{2} - M(y,x,Z)\ge\]
    \[\ge \frac{M(y,x,Z) + \trnorm{\varphi(x) - \varphi(y)}}{2}.\]
    We can now estimate
    \[d(y,z,Z) = M(y,z,Z) - m(y,z,Z)\le\]
    \[\le M(y,x,Z) - \frac{M(y,x,Z) + \trnorm{\varphi(x) - \varphi(y)}}{2}\le\]
    \[= \frac{M(y,x,Z) - \trnorm{\varphi(x) - \varphi(y)}}{2}\le \frac{M(y,x,Z) - m(y,x,Z)}{2}=\]
    \[=d(y,x,Z)/2,\]
    which yields the required inequality.
    
    For the general case, we proceed by induction on the size of $\yY$. If $\yY' = \yY \cup \{y\}$ and there is $z \in \fF(x,Z)$ that satisfies the conclusion for $\yY$, we can use the above to find $z' \in \fF(z,Z)$ such that $d(y,z',Z)\le d(y,z,Z)/2$. Then, $z'$ is the desired element for $\yY'$.
\end{proof}

\subsection{Forcing lemma for renormings\label{sec:aux_lur}}

In $V$, fix the objects as in Section \ref{sec:aux_ba}. Let $\dot{m},\dot{M},\dot{d}$ be $\Sacks$-names and assume that a condition $s\in\Sacks$ forces that the triple $(\dot{m},\dot{M},\dot{d})$ is associated with some equivalent norm $\trnorm{\,\cdot\,}$ on $C(St(\check{\aA}))$ in $V^{\Sacks}$.

\begin{lemma}\label{lemma:forcing_renorming}
    Let $x_0=\big(A_0,B_0\big)\in\xX$, $Z_0\in\dD(x_0)$, $L,M,N\io$, and $F\in\finsub{\dom(s)}$. Set $\gamma=|\lev(F,N,s)|$. 
    
    Then, there are a condition $s'\in\Sacks$ with $s'\le_{F,N} s$, disjoint antichains $A_1,\ldots,A_\gamma$ and $B_1,\ldots,B_\gamma$ in $\dD(x_0)$, and a sequence $\Big\langle\big\langle i_1^m,\ldots,i_\gamma^m\big\rangle\colon 1\le m\le L\Big\rangle$ of pairwise disjoint strictly increasing sequences of natural numbers, all strictly larger than $M$, such that:
    \begin{itemize}[itemsep=1mm]
        \item for each $1\le k\le\gamma$, setting
        \[\quad\quad x_k=x_0\oplus\bigoplus_{1\le i\le k}\big(A_i,B_i\big)\]
        and $Z_k=Z_{k-1}\vee\bigvee_{1\le m\le L}C_{i_k^m}$, we have $x_k\in\fF\big(x_{k-1},Z_{k-1}\big)$ and $Z_k \in \dD(x_k)$,
        \item $s'\forces \exists 1\le k\le \check{\gamma}\ \forall 0\le l<k\colon\ \dot{d}\big(\check{o},\check{x}_k,\check{Z}_k\big)\le\dot{d}\big(\check{o},\check{x}_l,\check{Z}_l\big)/2$.
    \end{itemize}
\end{lemma}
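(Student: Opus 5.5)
We will build everything by a finite recursion of length $\gamma$, running through an enumeration $\sigma_1,\ldots,\sigma_\gamma$ of $\lev(F,N,s)$ and using Lemma \ref{lemma:baumgartner}\ref{lemma:baumgartner:dense} to keep the fusion conditions $\le_{F,N}$. We set $s_0=s$ and, at stage $k$ ($1\le k\le\gamma$), already have $s_{k-1}\le_{F,N}s$, $x_{k-1}$, $Z_{k-1}\in\dD(x_{k-1})$ and all earlier indices $i^m_j$ for $j<k$. We then work below the single condition $s_{k-1}|\sigma_k$. Since $\supp(x_{k-1})\vee Z_{k-1}$ is disjoint from only finitely many of the $C_n$'s... in fact the definition of $\xX$ only guarantees $\alpha(\supp(x_{k-1}))=\infty$, and we will maintain inductively the stronger requirement that infinitely many $C_n$ are disjoint from $\supp(x_{k-1})\vee Z_{k-1}$. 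This will ensure that there is still room to add new elements and new pieces $C_{i}$ with large indices.

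At stage $k$ we argue in the extension. We take $p\le s_{k-1}|\sigma_k$ and a name. By Lemma \ref{lemma:basic}\ref{lemma:basic:exists}, applied with $\yY=\{o\}$, in $V^{\Sacks}$ below $s_{k-1}|\sigma_k$ there is some $z\in\fF(x_{k-1},Z_{k-1})$ with $\dot d(o,z,Z_{k-1})\le \dot d(o,x_{k-1},Z_{k-1})/2$. The difficulty is that $z$ lives in the extension and uses elements of $\check\aA$ chosen generically. Since $\aA$ and $\xX$ are in $V$, however, $z$ is a ground model object. Hence we extend $p\le s_{k-1}|\sigma_k$ to decide $z=\check z$. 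We also use the fact that this $z$ can be taken inside $\fF(x_{k-1},Z')$ for a slightly larger $Z'$. More precisely, we first choose $L$ fresh indices $i^1_k<\dots<i^L_k$, larger than $M$ and than all indices used so far, with $C_{i^m_k}$ disjoint from $\supp(x_{k-1})\vee Z_{k-1}$, and put $Z_k=Z_{k-1}\vee\bigvee_m C_{i^m_k}$. By Lemma \ref{lemma:basic}\ref{lemma:basic:bigger_Z} we have $\dot d(o,x_{k-1},Z_k)\le \dot d(o,x_{k-1},Z_{k-1})$, but we need the reverse direction. We will therefore apply \ref{lemma:basic:exists} directly with $Z_k$, obtaining $z\in\fF(x_{k-1},Z_k)$ with $d(o,z,Z_k)\le d(o,x_{k-1},Z_k)/2\le d(o,x_{k-1},Z_{k-1})/2$.

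Deciding $z$ by some $p_k\le s_{k-1}|\sigma_k$, we set $(A_k,B_k)=(l(z)\sm l(x_{k-1}), r(z)\sm r(x_{k-1}))$, so that $x_k=x_{k-1}\oplus(A_k,B_k)=z$. To keep infinitely many $C_n$ free, we shrink $A_k,B_k$ beforehand: we apply \ref{lemma:basic:exists} inside $\fF(x_{k-1},Z_k\vee W)$, where $W$ is a union of infinitely many remaining $C_n$'s, but leaves infinitely many others free (in $\wo/Fin$-like algebras we would use $\alpha$; in general it suffices to split the free indices into two infinite sets and take $W$ as a join if available; otherwise we only require $\alpha(\supp)=\infty$, which membership in $\xX$ already ensures). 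By Lemma \ref{lemma:baumgartner}\ref{lemma:baumgartner:dense} we obtain $s_k\le_{F,N}s_{k-1}$ with $s_k|\sigma_k=p_k$. Finally, we put $s'=s_\gamma$.

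Verification: any generic containing $s'$ contains some $s'|\sigma_k$, and $s'|\sigma_k\le p_k$ forces $\dot d(o,x_k,Z_k)\le \dot d(o,x_{k-1},Z_{k-1})/2$. We want the inequality for all $l<k$, and this follows by monotonicity, since $d(o,x_l,Z_l)$ is non-increasing in $l$ by \ref{lemma:basic:bigger_Z} and \ref{lemma:basic:bigger_x}. This holds because $x_{l+1}\in\fF(x_l,Z_l)$, so $x_{l+1}\in\fF(x_l,Z_{l+1})$ holds as well, by construction. The disjointness of the antichains and of the $C$-indices follows because each new piece is chosen disjoint from $\supp(x_{k-1})\vee Z_{k-1}$. \textbf{The main obstacle} is ensuring that disjointness from $Z_0$ and freshness survive all the stages when the free $C_n$'s are consumed. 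We handle this by always reserving infinitely many indices, which is possible because only finitely many stages occur.
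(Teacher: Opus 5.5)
Your scheme is the paper's. You enumerate $\lev(F,N,s)$ as $\sigma_1,\ldots,\sigma_\gamma$. At stage $k$ you apply Lemma~\ref{lemma:basic}(iii) in the extension below $s_{k-1}|\sigma_k$ and decide the ground-model witness by some $p_k\le s_{k-1}|\sigma_k$. Lemma~\ref{lemma:baumgartner}(b) then gives $s_k\le_{F,N}s_{k-1}$ with $s_k|\sigma_k=p_k$. You put $s'=s_\gamma$ and conclude with Lemma~\ref{lemma:baumgartner}(a) and the monotonicity of $l\mapsto d(o,x_l,Z_l)$ coming from Lemma~\ref{lemma:basic}(i)--(ii).

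The only real difference is the order of choices. You fix the new indices, hence $Z_k$, first and apply (iii) to $x_{k-1}$ with $Z_k$. This gives $x_k\in\fF(x_{k-1},Z_k)$ with $d(o,x_k,Z_k)\le d(o,x_{k-1},Z_k)/2\le d(o,x_{k-1},Z_{k-1})/2$. The paper instead applies (iii) with $Z_{k-1}$, then picks the $C_{i_k^m}$ disjoint from $\supp(x_k)$ (possible since $x_k\in\xX$), and invokes (i). Both orders work; yours yields $Z_k\in\dD(x_k)$ for free.

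However, one requirement you impose is neither needed nor attainable in general. You ask that the new $C_{i^m_k}$ be disjoint from $\supp(x_{k-1})\vee Z_{k-1}$, and you try to maintain that infinitely many $C_n$ miss $\supp(x_{k-1})\vee Z_{k-1}$. But $Z_0$ is an arbitrary element of $\dD(x_0)$. For instance $x_0=o$, $Z_0=1_\aA$ is allowed, as is any $Z_0\in\dD(x_0)$ meeting every $C_n$. Then no $C_n$ is disjoint from $Z_0$, so your first stage cannot be carried out as written. In the applications in Theorem~\ref{theorem:main_wMLUR}, $Z_0$ is a finite join of $C_n$'s, which hides the issue there.

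What you actually need for $Z_k\in\dD(x_{k-1})$ is only $C_{i_k^m}\wedge\supp(x_{k-1})=0_\aA$. Infinitely many such $n$ exist simply because $x_{k-1}\in\xX$. Since $x_k\in\xX$ again, this renews itself automatically at every stage. So delete the clause ``$\vee Z_{k-1}$'' and the whole paragraph about shrinking via $W$; joins of infinitely many $C_n$ need not exist in $\aA$ anyway.

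The ``main obstacle'' you name is therefore not an obstacle:
\begin{itemize}
\item Disjointness of $A_k,B_k$ from $Z_k\ge Z_0$ follows from $x_k\in\fF(x_{k-1},Z_k)$.
\item Their disjointness from $\supp(x_{k-1})$ and from each other follows from $l(x_k)\wedge r(x_k)=0_\aA$ together with $x_{k-1}\preceq x_k$.
\item Freshness of the indices uses only $\alpha(\supp(x_{k-1}))=\infty$.
\end{itemize}
With this correction your proof is complete.
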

\begin{proof}
    Let $\seq{\sigma_k}{1\le k\le\gamma}$ be an enumeration of $\lev(F,N,s)$. The proof will proceed by induction on $k=1,\ldots,\gamma$; we will however only present the first two steps (for $k=1,2$), as the further ones should be immediately clear.
    
    Let $k=1$. Set $s_0=s$. By Lemma \ref{lemma:basic}\ref{lemma:basic:exists}, there exist $x_1=\big(A,B\big)\in\fF\big(x_0,Z_0\big)$ and a forcing condition $p_1\le s_0|\sigma_1$ such that 
    \[\tag{1}p_1\forces \dot{d}\big(\check{o},\check{x}_1,\check{Z}_0\big)\le\dot{d}\big(\check{o},\check{x}_0,\check{Z}_0\big)/2.\]
    Since $(A,B)\in\xX$ and so $\alpha(A\vee B)=\infty$, there are distinct numbers $i_1^1,\ldots,i_1^L>M$ such that
    \[(A\vee B)\wedge\bigvee_{1\le m\le L}C_{i_1^m}=0_\aA.\]
    Set
    \[Z_1=Z_0\vee\bigvee_{1\le m\le L}C_{i_1^m}.\]
    According to Lemma \ref{lemma:basic}\ref{lemma:basic:bigger_Z}, we have
    \[p_1\forces \dot{d}\big(\check{o},\check{x}_1,\check{Z}_1\big)\le\dot{d}\big(\check{o},\check{x}_1,\check{Z}_0\big),\]
    so, by (1), it holds
    \[\tag{2}p_1\forces \dot{d}\big(\check{o},\check{x}_1,\check{Z}_1\big)\le\dot{d}\big(\check{o},\check{x}_0,\check{Z}_0\big)/2.\]
    Set $A_1=A\sm A_0$ and $B_1=B\sm B_0$. Then,
    \[\big(A_1\vee B_1\big)\wedge Z_1=0_\aA\quad\text{and}\quad x_1=x_0\oplus\big(A_1,B_1\big).\]
    
    By Lemma \ref{lemma:baumgartner}\ref{lemma:baumgartner:dense}, there is a condition $s_1\le_{F,N} s_0$ such that $s_1|\sigma_1=p_1$, so that by (2) we have
    \[\tag{3}s_1|\sigma_1\forces \dot{d}\big(\check{o},\check{x}_1,\check{Z}_1\big)\le\dot{d}\big(\check{o},\check{x}_0,\check{Z}_0\big)/2.\]
    Note however that \textit{a priori}, by Lemma \ref{lemma:basic}\ref{lemma:basic:bigger_Z}--\ref{lemma:basic:bigger_x}, for any $1<l\le\gamma$ we only have
    \[s_1|\sigma_l\forces \dot{d}\big(\check{o},\check{x}_1,\check{Z}_1\big)\le\dot{d}\big(\check{o},\check{x}_0,\check{Z}_0\big).\]

\medskip

    Let us now proceed with the step for $k=2$; it is almost the same as for $k=1$. By Lemma \ref{lemma:basic}\ref{lemma:basic:exists}, there exist $x_2=\big(A',B'\big)\in\fF\big(x_1,Z_1\big)$ and a forcing condition $p_2\le s_1|\sigma_2$ such that 
    \[\tag{1'}p_2\forces \dot{d}\big(\check{o},\check{x}_2,\check{Z}_1\big)\le\dot{d}\big(\check{o},\check{x}_1,\check{Z}_1\big)/2.\]
    Since $(A',B')\in\xX$ and so $\alpha(A'\vee B')=\infty$, there are distinct numbers $i_2^1,\ldots,i_2^L>\max\big\{i_1^1,\ldots,i_1^L\big\}$ such that
    \[(A'\vee B')\wedge\bigvee_{1\le m\le L}C_{i_2^m}=0_\aA.\]
    Set
    \[Z_2=Z_1\vee\bigvee_{1\le m\le L}C_{i_2^m}.\]
    According to Lemma \ref{lemma:basic}\ref{lemma:basic:bigger_Z}, we have
    \[p_2\forces \dot{d}\big(\check{o},\check{x}_2,\check{Z}_2\big)\le\dot{d}\big(\check{o},\check{x}_2,\check{Z}_1\big),\]
    so, by (1'), it holds
    \[\tag{2'}p_2\forces \dot{d}\big(\check{o},\check{x}_2,\check{Z}_2\big)\le\dot{d}\big(\check{o},\check{x}_1,\check{Z}_1\big)/2.\]
    Set $A_2=A'\sm\big(A_0\vee A_1\big)$ and $B_2=B'\sm\big(B_0\vee B_1\big)$. Then,
    \[\big(A_2\vee B_2\big)\wedge Z_2=0_\aA\quad\text{and}\quad x_2=x_0\oplus\big(A_1,B_1\big)\oplus\big(A_2,B_2\big).\]

\medskip

     By Lemma \ref{lemma:baumgartner}\ref{lemma:baumgartner:dense}, there is a condition $s_2\le_{F,N} s_1$ such that $s_2|\sigma_2=p_2$, so that by (2') we have
    \[\tag{3'}s_2|\sigma_2\forces \dot{d}\big(\check{o},\check{x}_2,\check{Z}_2\big)\le\dot{d}\big(\check{o},\check{x}_1,\check{Z}_1\big)/2.\]
    Note however that \textit{a priori}, by Lemma \ref{lemma:basic}\ref{lemma:basic:bigger_Z}--\ref{lemma:basic:bigger_x}, for any $2<l\le\gamma$ and $0\le m\le 1$ we only have
    \[s_2|\sigma_l\forces \dot{d}\big(\check{o},\check{x}_2,\check{Z}_2\big)\le\dot{d}\big(\check{o},\check{x}_m,\check{Z}_m\big).\]
    On the other hand, it also holds, by (3') and again Lemma \ref{lemma:basic}\ref{lemma:basic:bigger_Z}--\ref{lemma:basic:bigger_x}, that 
    \[\tag{4'}s_2|\sigma_2\forces \dot{d}\big(\check{o},\check{x}_2,\check{Z}_2\big)\le\dot{d}\big(\check{o},\check{x}_0,\check{Z}_0\big)/2.\]

\medskip

    As said at the beginning of the proof, we proceed in a similar way also in the $k$-th steps for $k=3,\ldots,\gamma$. After we have finished, we set $s'=s_\gamma$ and appeal to (3), (3'), (4'), etc., and Lemma \ref{lemma:baumgartner}\ref{lemma:baumgartner:antichain} to get that
    \[s'\forces \exists 1\le k\le \check{\gamma}\ \forall 0\le l<k\colon\ \dot{d}\big(\check{o},\check{x}_k,\check{Z}_k\big)\le\dot{d}\big(\check{o},\check{x}_l,\check{Z}_l\big)/2.\]
\end{proof}

\section{Main results \label{sec:main}}

Let us start with the following definition introduced by Seever \cite{See68}.

\begin{definition}\label{def:property_i}
    A Boolean algebra $\aA$ has the \textit{Interpolation Property} (in short, \textit{(I)}) if for any two disjoint countable antichains $\seqn{A_n}$ and $\seqn{B_n}$ in $\aA$ there is $A\iA$ such that $A_n\le A$ and $A\wedge B_n=0_\aA$ for all $n\io$.
\end{definition}

It is immediate that every $\sigma$-complete Boolean algebra has the Interpolation Property. By \cite[Theorem A]{See68}, a Boolean algebra $\aA$ has the property if and only if the Stone space $St(\aA)$ is an \textit{F-space}, that is, disjoint open $\mathbb{F}_\sigma$-sets have disjoint closures in $St(\aA)$. Recall that $\omega^*$ is a typical example of a non-extremally disconnected totally disconnected compact F-space, and so the quotient Boolean algebra $\wo/Fin$ has property (I).

We are in the position to prove the first two main results of this section.

\begin{theorem}\label{theorem:main_wMLUR}
Let $\aA\in V$ be a Boolean algebra which has property (I) in $V$. Let $G$ be an $\Sacks$-generic filter over $V$. Then, in $V[G]$ the Banach space $C(St(\aA))$ admits no equivalent wMLUR norm.
\end{theorem}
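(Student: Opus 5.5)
The plan is to argue by contradiction, combining a fusion argument in $\Sacks$ with property (I) in $V$ to build, inside $V$, a vector $x_\infty$ and sets $E_j$ that witness the failure of wMLUR in $V[G]$. Suppose $s_0\in\Sacks$ forces that $\trnorm{\,\cdot\,}$ is an equivalent wMLUR norm on $C(St(\check\aA))$, and let $(\dot m,\dot M,\dot d)$ be the associated triple. Working in $V$, fix an infinite antichain $\seqn{C_n}$ in $\aA$ and start with $x_0=o$, $Z_0=0_\aA$. Since $\varphi(o)=0$, the quantity $\dot d(o,x,Z)$ measures the oscillation of $\trnorm{\varphi(z)}$ over $z\in\fF(x,Z)$. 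It is forced to be at most $2b\cdot$const, because $\|\varphi(z)\|_\infty\le 1$.

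\emph{Fusion step.} Build a fusion sequence $s_0\ge_{F_0,0}s_1\ge_{F_1,1}s_2\ge\dots$ with the usual bookkeeping of finite sets $F_N\sub\dom(s_N)$, so that the fusion $s_\omega$ exists. At stage $N$ we have a finite tree of candidate pairs $(x,Z)$, one for each branch of possibilities decided so far. Apply Lemma \ref{lemma:forcing_renorming} below each $\sigma\in\lev(F_N,N,s_N)$, with $M$ larger than every $C$-index used so far. This extends each candidate by new pieces $(A_i,B_i)$ disjoint from everything previously chosen, and by new indices $i^m_k$. The lemma then guarantees that $s_{N+1}$ forces that along some extension the value of $\dot d(\check o,\cdot,\cdot)$ halves relative to all its predecessors. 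Iterating, $s_\omega$ forces the following: there is an infinite sequence $(x_{k_j},Z_{k_j})$ of ground-model objects along the generic branch with $x_{k_{j+1}}\in\fF(x_{k_j},Z_{k_j})$ and $\dot d(o,x_{k_j},Z_{k_j})\le 2^{-j}\cdot$const.

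\emph{Interpolation in $V$.} All pieces $(A_i,B_i)$ and all indices $i^m_k$ ever chosen form countably many ground-model objects, and by construction they are pairwise disjoint across the whole fusion tree. Property (I) in $V$ then yields:
\begin{itemize}
\item an element $A$ above all $A_i$ (and $A_0$) and disjoint from all $B_i$ and all chosen $C_{i^m_k}$;
\item symmetrically, an element $B$, which we shrink by $A$ to make it disjoint from $A$.
\end{itemize}
Set $x_\infty=(A,B)$. It avoids infinitely many $C_n$, so $x_\infty\in\xX$, and $x_\infty\in\fF(x_{k},Z_{k})$ for every candidate. Also by (I), for each level $j$ choose $E_j\in V$ above all $C_{i^m_k}$ chosen at stages $\ge j$ and disjoint from $A$, $B$ and all $C$'s chosen before stage $j$. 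We may arrange the $E_j$ to be decreasing.

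\emph{Contradiction in $V[G]$.} Both $x_\infty\oplus(E_j,0)$ and $x_\infty\oplus(0,E_j)$ lie in $\fF(x_{k_{j'}},Z_{k_{j'}})$ for suitable $j'\to\infty$ as $j\to\infty$. This uses that $E_j$ avoids $Z_{k_{j'}}$ for $j'$ small relative to $j$, and that $x_\infty\oplus(E_j,0)$ still avoids infinitely many $C_n$, which requires leaving out at least one index per level. Hence $\trnorm{\varphi(x_\infty)\pm\chi_{[E_j]}}\to\trnorm{\varphi(x_\infty)}$. Normalising, wMLUR forces $\chi_{[E_j]}\to0$ weakly. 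However, the $[E_j]$ form a decreasing sequence of nonempty clopen sets, so some point $p\in\bigcap_j[E_j]$ exists, and $\delta_p(\chi_{[E_j]})=1$ for all $j$, a contradiction.

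The main obstacle I expect is the bookkeeping in the Fusion and Interpolation steps. The sets $E_j$ must be chosen in $V$ \emph{before} the generic decides which branch, and hence which halving indices $k_j$, is realised. So the countably many candidate choices must be made pairwise disjoint and the $C$-indices reserved level by level: the $C$'s added to $Z$ at later stages must be exactly those absorbed into $E_j$, while earlier $Z$'s stay disjoint from $E_j$. This is what forces the parameters $M$ (freshness of the indices) and $L$ in Lemma \ref{lemma:forcing_renorming}.
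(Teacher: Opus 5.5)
Your proposal is correct and follows essentially the paper's proof. The shared ingredients are: a fusion sequence built by iterating Lemma~\ref{lemma:forcing_renorming} with $L=2$, where one index of each pair is absorbed into the perturbation and the other is reserved so that $\alpha=\infty$ survives; property (I) applied in $V$ to produce $x=(A,B)$ and the perturbing sets; the limit argument from $d\to0$; and a point in a decreasing intersection of clopen sets contradicting weak nullness.

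Two small corrections to your bookkeeping. First, the lemma produces a single $\preceq$-chain $x_{N,1}\preceq\dots\preceq x_{N,\gamma_N}$, not a genuine tree of candidates, and this chain structure is what lets one ground-model $x$ lie in every $\fF(x_{N,k},Z_{N,k})$. Second, both the absorbed and the reserved $C$'s must be put into $Z$; your remark that the $C$'s added to $Z$ are ``exactly those absorbed into $E_j$'' is therefore off. As a minor variation, the paper obtains one element $C$ from (I) and uses $D_{N,k}$ ($C$ minus finitely many $C_{i_{N',l}}$) in place of your separately interpolated $E_j$.
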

\begin{proof}
Starting in the ground model $V$, let the objects as in Section \ref{sec:aux_ba} be given for the algebra $\aA$. 

In the extension $V[G]$, let $\trnorm{\,\cdot\,}$ be an equivalent norm on the Banach space $C(St(\aA))$ and let the functions $m,M,d$ be defined for $\trnorm{\,\cdot\,}$ as in Section \ref{sec:aux_norms}. For the sake of contradiction assume that $\trnorm{\,\cdot\,}$ is wMLUR.

Let us go back to $V$. Let $\dot{n},\dot{m},\dot{M},\dot{d}$ be $\Sacks$-names such that $(\dot{n})_G=\trnorm{\,\cdot\,}$, $(\dot{m})_G=m$, $(\dot{M})_G=M$, and $(\dot{d})_G=d$, and let $s\in G$ be a condition forcing that $\dot{n}$ is an equivalent wMLUR norm on $C(St(\check{\aA}))$ and that the triple $(\dot{m},\dot{M},\dot{d})$ is associated with $\dot{n}$.

\medskip

We will find a condition $s^*\in\Sacks$, $s^*\le s$, forcing that $\dot{n}$ is not a wMLUR norm. We proceed by induction on $N\io$, that is, by the inductive use of Lemma \ref{lemma:forcing_renorming} (with $L=2$) we obtain:
\begin{itemize}[itemsep=1mm]
    \item a sequence $\seqN{s_N}$ of conditions in $\Sacks$ such that $s_0=s$ and $s_{N+1}\le_{F_N,N}s_N$ for every $N\io$, where we enumerate $\dom(s_N)=\big\{\alpha_k^N\colon k\io\big\}$ and we put $F_N=\big\{\alpha_k^i\colon i,k\le N\big\}$ and $\gamma_N=\big|\lev\big(F_N,N,s_N\big)\big|$,
    \item sequences $\Big\langle\big\langle A_{N,1},\ \ldots,\ A_{N,\gamma_N}\big\rangle\colon\ N\io\Big\rangle$ and $\Big\langle\big\langle B_{N,1},\ \ldots,\ B_{N,\gamma_N}\big\rangle\colon\ N\io\Big\rangle$ of finite pairwise disjoint antichains in $\aA$ such that
    \[\quad\quad\bigvee_{i=1}^{\gamma_N} A_{N,i}\wedge\bigvee_{j=1}^{\gamma_{N'}}B_{N',j}=0_\aA\]
    for every $N, N'\io$,
    \item and sequences $\Big\langle\big\langle i_{N,1},\ \ldots,\ i_{N,\gamma_N}\big\rangle\colon\ N\io\Big\rangle$ and $\Big\langle\big\langle j_{N,1},\ \ldots,\ j_{N,\gamma_N}\big\rangle\colon\ N\io\Big\rangle$ of finite strictly increasing sequences of natural numbers such that $i_{N,k}\neq j_{N',l}$, $i_{N,\gamma_N}<i_{N+1,1}$, and $j_{N',\gamma_{N'}}<j_{N'+1,1}$ for all $N,N'\io$, $1\le k\le\gamma_N$, and $1\le l\le\gamma_{N'}$,
\end{itemize}
having also the following properties:
\begin{itemize}[itemsep=1mm]\renewcommand{\labelitemi}{\tiny{$\blacksquare$}}
    \item for all $N,N'\io$ we have
    \[\quad\quad\bigvee_{i=1}^{\gamma_N}\big(A_{N,i}\vee B_{N,i}\big)\wedge\bigvee_{k=1}^{\gamma_{N'}}\big(C_{i_{N',k}}\vee C_{j_{N',k}}\big)=0_\aA,\]
    \item for each $N\io$ and $1\le k\le\gamma_N$, setting
    \[\quad\quad x_{N,k}=\bigoplus_{N'<N}\bigoplus_{1\le i\le\gamma_{N'}}\big(A_{N',i},B_{N',i}\big)\oplus\bigoplus_{1\le i\le k}\big(A_{N,i},B_{N,i}\big)\]
    and
    \[\quad\quad Z_{N,k}=\bigvee_{N'<N}\bigvee_{1\le l\le\gamma_{N'}}\big(C_{i_{N',l}}\vee C_{j_{N',l}}\big)\vee\bigvee_{1\le l\le k}\big(C_{i_{N,l}}\vee C_{j_{N,l}}\big),\]
    as well as auxiliarily
    \[\quad\quad x_{0,0}=o\quad\text{and}\quad Z_{0,0}=0_\aA,\]
    \[\quad\quad x_{N,0}=x_{N-1,\gamma_{N-1}}\quad\text{and}\quad Z_{N,0}=Z_{N-1,\gamma_{N-1}}\quad\text{if }N>0,\]
    we have
    \[\quad\quad x_{N,k}\in\fF\big(x_{N,k-1}\ ,\ Z_{N,k-1}\big),\]
    \item for each $N\io$ we have
    \[\tag{$1$}\quad\quad s_N\forces\exists 1\le k\le\check{\gamma}_{N}\ \forall 0\le l<k\colon\ \dot{d}\big(\check{o},\check{x}_{N,k},\check{Z}_{N,k}\big)\le\dot{d}\big(\check{o},\check{x}_{N,l},\check{Z}_{N,l}\big)/2.\]
\end{itemize}
Then, let $s^*\in\Sacks$ be a fusion condition of the fusion sequence $\seqN{s_N}$, that is, such condition $s^*$ that $s^*\le_{F_N,N} s_N$ holds for every $N\io$ (see \cite[Lemma 1.6]{Bau85}). From (1) it follows that
\begin{align*}
    \tag{$2$}s^*\forces\forall N\in\check{\omega}\ \exists 1\le k_N\le\check{\gamma}_N\ &\forall 0\le l<k_N\colon\ \\&\dot{d}\big(\check{o},\check{x}_{N,k_N},\check{Z}_{N,k_N}\big)\le\dot{d}\big(\check{o},\check{x}_{N,l},\check{Z}_{N,l}\big)/2.
\end{align*}

\medskip

As the Boolean algebra $\aA$ has property (I) in $V$, there are disjoint elements $A,B,C\in\aA$ such that for all $N\io$ we have
\[\bigvee_{1\le k\le\gamma_N}A_{N,k}\le A\quad,\quad\bigvee_{1\le k\le\gamma_N}B_{N,k}\le B\quad,\quad\bigvee_{1\le k\le\gamma_N}C_{i_{N,k}}\le C\quad,\]
and
\[A\wedge\bigvee_{1\le k\le\gamma_N}\big(B_{N,k}\vee C_{i_{N,k}}\vee C_{j_{N,k}}\big)=0_\aA,\]
\[B\wedge\bigvee_{1\le k\le\gamma_N}\big(A_{N,k}\vee C_{i_{N,k}}\vee C_{j_{N,k}}\big)=0_\aA,\]
\[C\wedge\bigvee_{1\le k\le\gamma_N}\big(A_{N,k}\vee B_{N,k}\vee C_{j_{N,k}}\big)=0_\aA.\]
As $(A\vee B\vee C) \wedge C_{j_{N,k}} = 0_\aA$ for all $N \io$ and $1 \le k \le \gamma_N$, we have $\alpha(A\vee B\vee C)=\infty$. Let $x=(A,B)$, so that $x\in\xX$. For every $N\io$ and $1\le k\le\gamma_N$ set also
\[D_{N,k}=C\sm\bigg(\bigvee_{N'<N}\bigvee_{1\le l\le \gamma_{N'}}C_{i_{N',l}}\vee\bigvee_{1\le l\le k}C_{i_{N,l}}\bigg),\]
\[y_{N,k}^+=\big(D_{N,k},0_\aA\big)\quad\text{and}\quad y_{N,k}^-=\big(0_\aA,D_{N,k}\big);\]
note that $y_{N,k}^\pm\in\xX$ and
\[\tag{3}x,\, x\oplus y_{N,k}^\pm\in\fF\big(x_{N,k},Z_{N,k}\big).\]

\medskip

Let now $G'$ be an $\Sacks$-generic filter over $V$ containing $s^*$ (and so $s$). Let us go to the extension $V[G']$. Let us also write $\trnorm{\,\cdot\,}=(\dot{n})_{G'}$, $m=(\dot{m})_{G'}$, $M=(\dot{M})_{G'}$, and $d=(\dot{d})_{G'}$. Note that by the assumption the norm $\trnorm{\,\cdot\,}$ is also wMLUR in $V[G']$. By (2), for each $N\io$ there is $1\le k_N\le\gamma_N$ such that for every $0\le l<k_N$ we have
\[d\big(o,\, x_{N,k_N},\, Z_{N,k_N}\big)\le d\big(o,\, x_{N,l},\, Z_{N,l}\big)/2.\]
In particular, by Lemma \ref{lemma:basic}\ref{lemma:basic:bigger_Z}--\ref{lemma:basic:bigger_x}, for every $N\io$ it holds
\[d\big(o,\, x_{N+1,k_{N+1}},\, Z_{N+1,k_{N+1}}\big)\le d\big(o,\, x_{N+1,0},\, Z_{N+1,0}\big)/2=\]
\[=d\big(o,\, x_{N,\gamma_{N}},\, Z_{N,\gamma_{N}}\big)/2\le d\big(o,\, x_{N,k_{N}},\, Z_{N,k_{N}}\big)/2.\]
Since $d\big(o,\, x_{0,k_0},\, Z_{0,k_0}\big)<\infty$, it follows that
\[\tag{4}\lim_{N\to\infty}d\big(o,\, x_{N,k_N},\, Z_{N,k_N}\big)=0.\]

For each $N\io$ let $g_N=\chi_{\clopen{D_{N,k_N}}}=\varphi\Big(y_{N,k_N}^+\Big)$, so that $g_N\neq0$ and we have
\[\varphi\bigg(x\oplus\Big(y_{N,k_N}^\pm\Big)\bigg)=\varphi(x)+\varphi\Big(y_{N,k_N}^\pm\Big)=\varphi(x)\pm g_N.\]
It follows from (3) that for all $N\io$ it holds
\[m\big(o,\, x_{N,k_N},\, Z_{N,k_N}\big)\le\trnorm{\varphi(x)}\le M\big(o,\, x_{N,k_N},\, Z_{N,k_N}\big)\]
and
\[m\big(o,\, x_{N,k_N},\, Z_{N,k_N}\big)\le\trnorm{\varphi(x)\pm g_N}\le M\big(o,\, x_{N,k_N},\, Z_{N,k_N}\big).\]
Hence, (4) implies that
\[\lim_{N\to\infty}m\big(o,\, x_{N,k_N},\, Z_{N,k_N}\big)=\lim_{N\to\infty}M\big(o,\, x_{N,k_N},\, Z_{N,k_N}\big)=\]
\[=\lim_{N\to\infty}\trnorm{\varphi(x)\pm g_N}=\trnorm{\varphi(x)}.\]
Note that $\trnorm{\varphi(x)}>0$, since $\varphi(x)\neq0$. Consequently, setting
\[f=\varphi(x)/\trnorm{\varphi(x)}\quad\text{and}\quad f_N=g_N/\trnorm{\varphi(x)}\quad\text{for }N\io,\]
we have $\trnorm{f}=1$ and $\lim_{N\to\infty}\trnorm{f\pm f_N}=1$.

Since the clopen set $\clopen{C}$ is compact and infinite, there is a point 
\[t\in\bigcap_{N\io}\clopen{D_{N,k_N}}.\]
Then, for each $N\io$ and the one-point Radon measure $\delta_t$ on $St(\aA)$ concentrated at $t$, we have 
\[\int_{St(\aA)}f_Nd\delta_t=f_N(t)=1/\trnorm{\varphi(x)},\]
so the sequence $\seqN{f_N}$ does not converge weakly to $0$. Consequently, the norm $\trnorm{\,\cdot\,}$ is not wMLUR (in $V[G']$), which is a contradiction. 

It follows that $C(St(\aA))$ does not admit any wMLUR renorming in $V[G]$.
\end{proof}

\begin{theorem}\label{theorem:main_SKK}
Let $\aA\in V$ be a Boolean algebra which has property (I) in $V$. Let $G$ be an $\Sacks$-generic filter over $V$. Then, in $V[G]$ the Banach space $C(St(\aA))$ admits no equivalent SKK norm.
\end{theorem}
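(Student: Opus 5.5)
The plan is to run exactly the same fusion construction as in the proof of Theorem \ref{theorem:main_wMLUR}, and change only the final step: there we produced a non-weakly-null sequence $f_N$ with $\trnorm{f\pm f_N}\to 1$; here we will produce a sequence on the sphere that converges weakly but not in norm.

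\textbf{Setup.} Work in $V$ and assume some $s\in G$ forces that $\dot n$ is an equivalent SKK norm with associated triple $(\dot m,\dot M,\dot d)$. Apply Lemma \ref{lemma:forcing_renorming} inductively, now with $L$ large enough (say $L=2$ or $3$), to obtain the following data:
\begin{itemize}
\item the fusion sequence $s_N$;
\item the antichains $A_{N,k},B_{N,k}$;
\item the indices $i_{N,k},j_{N,k}$;
\item the elements $x_{N,k},Z_{N,k}$.
\end{itemize}
Take a fusion condition $s^*$ satisfying (2). As before, use property (I) in $V$ to get disjoint $A,B,C$ interpolating the $A$-, $B$- and $C_{i}$-parts and missing the $C_j$-parts.

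\textbf{Choice of test functions.} We need the differences from $\varphi(x)$ to be weakly null. So instead of the decreasing tails $D_{N,k}$, which share a common point $t$, we use pieces that are pairwise disjoint. Set $E_N=\bigvee_{l\le k_N}C_{i_{N,l}}\wedge C$, or more simply a single atom $C_{i_{N,k_N}}$, which lies below $C$ and is disjoint from $\supp(x_{N,k_N})\vee Z_{N,k_N}$ at the relevant stage. The difficulty is that $k_N$ is only known in $V[G']$. This is handled because $C\wedge C_{i_{N,k}}$ is disjoint from $Z_{N,k-1}$: if we arrange (via the $L$ parameter) that $C_{i_{N,k}}$ is \emph{not} included in $Z_{N,k}$ but reserved, then $(D,0)$ with $D=C_{i_{N,k_N}}$ gives $x\oplus(D,0)\in\fF(x_{N,k_N},Z_{N,k_N})$. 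Concretely, I would split the $L$ indices into those put into $Z$ and one left free, keeping the $C$-part outside $Z$. The functions $g_N=\chi_{[E_N]}$ are then disjointly supported and uniformly bounded. Since disjointly supported bounded sequences in $C(K)$ are weakly null (integrate against any Radon measure), $\varphi(x)+g_N\to\varphi(x)$ weakly, while $\|g_N\|_\infty=1$, so there is no norm convergence.

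\textbf{Conclusion in $V[G']$.} From (4), both $\varphi(x)$ and $\varphi(x)+g_N$ have norms squeezed between $m$ and $M$ at $(o,x_{N,k_N},Z_{N,k_N})$, so $\trnorm{\varphi(x)+g_N}\to\trnorm{\varphi(x)}$. Normalize $h_N=(\varphi(x)+g_N)/\trnorm{\varphi(x)+g_N}$. Then $h_N\in S$ and $h_N\to f=\varphi(x)/\trnorm{\varphi(x)}$ weakly, because the scalars converge. But $\|h_N-f\|_\infty\not\to0$, which contradicts SKK.

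\textbf{Main obstacle.} The delicate point is the bookkeeping that makes the added pieces lie inside $\dD$ of the right stage, so that $x\oplus(E_N,0)$ really belongs to $\fF(x_{N,k_N},Z_{N,k_N})$, while also keeping them pairwise disjoint. I would first try defining $Z_{N,k}$ to contain only the $C_j$'s together with the earlier $C_i$'s, and reserving $C_{i_{N,k}}$ itself for the test function.
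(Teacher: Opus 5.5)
Your overall strategy is the paper's: perturb $\varphi(x)$ by disjointly supported indicators, so that the normalized sequence converges weakly but not in norm. However, the bookkeeping you flag as the main obstacle is a genuine gap as written.

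In the construction you import from Theorem \ref{theorem:main_wMLUR}, i.e.\ from Lemma \ref{lemma:forcing_renorming}, $Z_{N,k}$ contains $C_{i_{N,l}}$ for every $l\le k$. So both of your candidates, $E_N=\bigvee_{l\le k_N}C_{i_{N,l}}$ and $C_{i_{N,k_N}}$, lie below $Z_{N,k_N}$. Hence $x\oplus(E_N,0_\aA)\notin\fF(x_{N,k_N},Z_{N,k_N})$, and the squeeze between $m$ and $M$ is not available.

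Your proposed repair is not yet an argument, in either reading:
\begin{enumerate}
\item Suppose the reserved atom $C_{i_{N,k}}$ is simply left out of every $Z$. Lemma \ref{lemma:basic}\ref{lemma:basic:exists} only forces the later pieces $A_{N',k'},B_{N',k'}$ to avoid the current $Z$, so they may meet $C_{i_{N,k}}$. Then (I) cannot produce $A,B$ disjoint from it. Consequently $x\oplus(C_{i_{N,k}},0_\aA)$ need not lie in $\xX$, and its $\varphi$-image need not be $\varphi(x)+\chi_{\clopen{C_{i_{N,k}}}}$.
\item Suppose instead you put it into $Z_{N,k+1}$, as your last sentence suggests. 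Then $Z_{N,k+1}\in\dD(x_{N,k+1})$ requires $x_{N,k+1}$ to avoid $C_{i_{N,k}}$, which the step $x_{N,k+1}\in\fF(x_{N,k},Z_{N,k})$ does not give. This can be fixed by re-proving Lemma \ref{lemma:forcing_renorming} with Lemma \ref{lemma:basic}\ref{lemma:basic:exists} applied at $(x_{N,k},Z_{N,k}\vee C_{i_{N,k}})$. Lemma \ref{lemma:basic}\ref{lemma:basic:bigger_Z} then recovers $d(o,x_{N,k+1},Z_{N,k+1})\le d(o,x_{N,k},Z_{N,k})/2$. But that verification is exactly what is missing from your proposal.
\end{enumerate}

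The paper sidesteps all of this without changing the construction. It runs Lemma \ref{lemma:forcing_renorming} with $L=1$; no $C_j$'s and no third interpolant $C$ are needed. It then tests with the \emph{next} reserved atom, $w_{N,k}=x\oplus(C_{i_{N,k+1}},0_\aA)$, with the convention $C_{i_{N,\gamma_N+1}}=C_{i_{N+1,1}}$.
\begin{itemize}
\item This atom is not yet in $Z_{N,k}$, and $A\vee B$ misses it by the interpolation step. So $x,w_{N,k}\in\fF(x_{N,k},Z_{N,k})$ for all $N$ and $k$.
\item The atoms $C_{i_{N,k_N+1}}$ are pairwise distinct, because $k_{N+1}\ge 1$ forces $i_{N+1,k_{N+1}+1}>i_{N+1,1}\ge i_{N,k_N+1}$.
\end{itemize}
With this choice the rest of your argument goes through as you describe: the squeeze via $d\to 0$, weak nullity of disjointly supported indicators, and evaluation at a point of the atom to bound $\trnorm{f-f_N}$ from below.
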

\begin{proof}
The proof is similar to the one of Theorem \ref{theorem:main_wMLUR}. So, starting in the ground model $V$, let the objects as in Section \ref{sec:aux_ba} for $\aA$ be given. Then, in $V[G]$, again for the sake of contradiction, assume that an equivalent norm $\trnorm{\,\cdot\,}$ on $C(St(\aA))$ has the SKK property. Again back in $V$, let $\dot{n},\dot{m},\dot{M},\dot{d}$ be $\Sacks$-names such that $(\dot{n})_G=\trnorm{\,\cdot\,}$, $(\dot{m})_G=m$, $(\dot{M})_G=M$, and $(\dot{d})_G=d$, and let $s\in G$ be a condition forcing that $\dot{n}$ is an equivalent SKK norm on $C(St(\check{\aA}))$ and that $(\dot{m},\dot{M},\dot{d})$ is the triple associated with $\dot{n}$. As in the proof of Theorem \ref{theorem:main_wMLUR}, with the aid of Lemma \ref{lemma:forcing_renorming} (with $L=1$) and by the fusion argument, we construct:
\begin{itemize}[itemsep=1mm]
    \item a condition $s^{**}\in\Sacks$, $s^{**}\le s$,
    \item sequences $\Big\langle\big\langle A_{N,1},\ \ldots,\ A_{N,\gamma_N}\big\rangle\colon\ N\io\Big\rangle$ and $\Big\langle\big\langle B_{N,1},\ \ldots,\ B_{N,\gamma_N}\big\rangle\colon\ N\io\Big\rangle$ of finite pairwise disjoint antichains in $\aA$ such that
    \[\quad\quad\bigvee_{i=1}^{\gamma_N} A_{N,i}\wedge\bigvee_{j=1}^{\gamma_{N'}}B_{N',j}=0_\aA\]
    for every $N, N'\io$,
    \item and a sequence $\Big\langle\big\langle i_{N,1},\ \ldots,\ i_{N,\gamma_N}\big\rangle\colon\ N\io\Big\rangle$ of finite strictly increasing sequences of natural numbers such that $i_{N,\gamma_N}<i_{N+1,1}$ for all $N\io$,
\end{itemize}
having also the following properties:
\begin{itemize}[itemsep=1mm]\renewcommand{\labelitemi}{\tiny{$\blacksquare$}}
    \item for all $N,N'\io$ we have
    \[\quad\quad\bigvee_{i=1}^{\gamma_N}\big(A_{N,i}\vee B_{N,i}\big)\wedge\bigvee_{k=1}^{\gamma_{N'}}C_{i_{N',k}}=0_\aA,\]
    \item for each $N\io$ and $1\le k\le\gamma_N$, setting
    \[\quad\quad x_{N,k}=\bigoplus_{N'<N}\bigoplus_{1\le i\le\gamma_{N'}}\big(A_{N',i},B_{N',i}\big)\oplus\bigoplus_{1\le i\le k}\big(A_{N,i},B_{N,i}\big)\]
    and
    \[\quad\quad Z_{N,k}=\bigvee_{N'<N}\bigvee_{1\le l\le\gamma_{N'}}C_{i_{N',l}}\vee\bigvee_{1\le l\le k}C_{i_{N,l}},\]
    as well as auxiliarily
    \[\quad\quad x_{0,0}=o\quad\text{and}\quad Z_{0,0}=0_\aA,\]
    \[\quad\quad x_{N,0}=x_{N-1,\gamma_{N-1}}\quad\text{and}\quad Z_{N,0}=Z_{N-1,\gamma_{N-1}}\quad\text{if }N>0,\]
    we have
    \[\quad\quad x_{N,k}\in\fF\big(x_{N,k-1}\ ,\ Z_{N,k-1}\big),\]
    \item it holds
    \begin{align*}
    \tag{$1$}s^{**}\forces\forall N\in\check{\omega}\ \exists 1\le k_N\le\check{\gamma}_N\ &\forall 0\le l<k_N\colon\ \\&\dot{d}\big(\check{o},\check{x}_{N,k_N},\check{Z}_{N,k_N}\big)\le\dot{d}\big(\check{o},\check{x}_{N,l},\check{Z}_{N,l}\big)/2.
\end{align*}
\end{itemize}

\medskip

As the Boolean algebra $\aA$ has property (I) in $V$, there are disjoint elements $A,B\in\aA$ such that for all $N\io$ we have
\[\bigvee_{1\le k\le\gamma_N}A_{N,k}\le A\quad,\quad\bigvee_{1\le k\le\gamma_N}B_{N,k}\le B\quad,\]
and
\[A\wedge\bigvee_{1\le k\le\gamma_N}\big(B_{N,k}\vee C_{i_{N,k}}\big)=0_\aA\quad,\quad B\wedge\bigvee_{1\le k\le\gamma_N}\big(A_{N,k}\vee C_{i_{N,k}}\big)=0_\aA.\]
We have $\alpha(A\vee B)=\infty$. Let $x=(A,B)$, so that $x\in\xX$. For every $N\io$ and $1\le k\le\gamma_N$ set also
\[w_{N,k}=x\oplus\big(C_{i_{N,k+1}},0_\aA\big),\]
where we auxiliarily put
\[C_{i_{N,\gamma_N+1}}=C_{i_{N+1,1}}.\]
Note that for every $N\io$ and $1\le k\le\gamma_N$ we have $w_{N,k}\in\xX$ and
\[\tag{2}x,\, w_{N,k}\in\fF\big(x_{N,k},Z_{N,k}\big).\]

\medskip

Let now $G'$ be an $\Sacks$-generic filter over $V$ containing $s^{**}$ (and so $s$). Let us go to the extension $V[G']$. Let us again also write $\trnorm{\,\cdot\,}=(\dot{n})_{G'}$, $m=(\dot{m})_{G'}$, $M=(\dot{M})_{G'}$, and $d=(\dot{d})_{G'}$. Note that by the assumption the norm $\trnorm{\,\cdot\,}$ is also SKK in $V[G']$. As in the proof of Theorem \ref{theorem:main_wMLUR}, using (1) and (2), we get a sequence $\seqN{k_N}$ of natural numbers such that 
\[\tag{3}\lim_{N\to\infty}\trnorm{\varphi\big(w_{N,k_N}\big)}=\trnorm{\varphi(x)}.\]
Of course, $\trnorm{\varphi(x)}\neq0$ and $\trnorm{\varphi\big(w_{N,k_N}\big)}\neq0$ for each $N\io$. Set
\[f=\varphi(x)/\trnorm{\varphi(x)}\quad\text{and}\quad f_N=\varphi\big(w_{N,k_N}\big)\big/\trnorm{\varphi\big(w_{N,k_N}\big)}\quad\text{for }N\io.\]

Let $\mu$ be a signed Radon measure on $St(\aA)$. Since the clopen sets $\clopen{C_{i_{N,k_N+1}}}$'s are pairwise disjoint, we have
\[\tag{4}\lim_{N\to\infty}\bigg|\int_{St(\aA)}\varphi(x)d\mu-\int_{St(\aA)}\varphi\big(w_{N,k_N}\big)d\mu\bigg|=\]
\[=\lim_{N\to\infty}\bigg|\int_{St(\aA)}\Big(\varphi(x)-\varphi\big(w_{N,k_N}\big)\Big)d\mu\bigg|\le\lim_{N\to\infty}|\mu|\bigg(\clopen{C_{i_{N,k_N+1}}}\bigg)=0.\]
Setting again for each $N\io$
\[a_N=\Big(\trnorm{\varphi(x)}\cdot\trnorm{\varphi\big(w_{N,k_N}\big)}\Big)^{-1},\]
by (3) and (4), we get
\[\lim_{N\to\infty}\bigg|\int_{St(\aA)}\big(f-f_N\big)d\mu\bigg|=\]
\[=\lim_{N\to\infty}\bigg|\int_{St(\aA)}\Bigg(\frac{\varphi(x)}{\trnorm{\varphi(x)}}-\frac{\varphi\big(w_{N,k_N}\big)}{\trnorm{\varphi\big(w_{N,k_N}\big)}}\Bigg) d\mu\bigg|=\]
\[=\lim_{N\to\infty}a_N\cdot\bigg|\int_{St(\aA)}\Big(\trnorm{\varphi\big(w_{N,k_N}\big)}\varphi(x)-\trnorm{\varphi(x)}\varphi\big(w_{N,k_N}\big)\Big)d\mu\bigg|=\]
\[=\lim_{N\to\infty}a_N\cdot\bigg|\trnorm{\varphi\big(w_{N,k_N}\big)}\int_{St(\aA)}\varphi(x)d\mu-\trnorm{\varphi(x)}\int_{St(\aA)}\varphi\big(w_{N,k_N}\big)d\mu\bigg|=\]
\[=\trnorm{\varphi(x)}^{-2}\cdot\bigg|\trnorm{\varphi(x)}\int_{St(\aA)}\varphi(x)d\mu-\trnorm{\varphi(x)}\int_{St(\aA)}\varphi(x)d\mu\bigg|=0.\]
Consequently, as $\mu$ was arbitrary, the sequence $\seqN{f_N}$ converges weakly to $f$.

For each $N\io$ pick $t_N\in\clopen{C_{i_{N,k_N+1}}}$; note that $\varphi\big(w_{N,k_N}\big)\big(t_N\big)=1$ and $\varphi(x)\big(t_N\big)=0$ . Let $c>0$ be such that $\trnorm{\,\cdot\,}\ge c\|\cdot\|_\infty$. By a similar computation as above, we get
\[\limsup_{N\to\infty}\trnorm{f-f_N}=\]
\[=\limsup_{N\to\infty}\ a_N\cdot\trnorm{\vphantom{\Big|}\trnorm{\varphi\big(w_{N,k_N}\big)}\varphi(x)-\trnorm{\varphi(x)}\varphi\big(w_{N,k_N}\big)}\ge\]
\[\ge\limsup_{N\to\infty}\ a_N\cdot c\cdot\left\|\vphantom{\Big|}\trnorm{\varphi\big(w_{N,k_N}\big)}\varphi(x)-\trnorm{\varphi(x)}\varphi\big(w_{N,k_N}\big)\right\|_\infty\ge\]
\[\ge\limsup_{N\to\infty}\ a_N\cdot c\left|\vphantom{\Big|}\trnorm{\varphi\big(w_{N,k_N}\big)}\varphi(x)\big(t_N\big)-\trnorm{\varphi(x)}\varphi\big(w_{N,k_N}\big)\big(t_N\big)\right|=\]
\[=\limsup_{N\to\infty}\ a_N\cdot c\cdot\trnorm{\varphi(x)}=c/\trnorm{\varphi(x)}>0.\]
Consequently, the sequence $\seqN{f_N}$ does not converge to $f$ in the norm $\trnorm{\,\cdot\,}$, even though it converges to $f$ weakly, and so the norm $\trnorm{\,\cdot\,}$ is not SKK (in $V[G']$). A contradiction.

It follows that $C(St(\aA))$ does not admit any SKK renorming in $V[G]$.
\end{proof}

We immediately obtain the following important corollary.

\begin{corollary}\label{cor:main}
Assume that $V$ is a model of \textup{\textsf{ZFC}$+$\textsf{CH}} and let $\aA=\wo^V$. If $G$ is an $\Sacks$-generic filter over $V$, then in $V[G]$ it holds $\omega_1<\kappa\le\frakc$ and the Banach space $C(St(\aA))$ is a Grothendieck space of density $\omega_1$ which admits an equivalent strictly convex norm, but no wMLUR renormings and no SKK renormings.
\end{corollary}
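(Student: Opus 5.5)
The plan is to verify the four clauses of Corollary \ref{cor:main} separately: the cardinal arithmetic, the density, the two renorming clauses (one positive, one negative), and the Grothendieck property. Throughout, $\aA=\wo^V$ is a $\sigma$-complete Boolean algebra in $V$, so it has property (I) in $V$.

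\emph{Cardinal arithmetic and density.} I will use the standard facts about $\Sacks$, cf.\ \cite{Bau85}.
\begin{enumerate}[(a),itemsep=1mm]
\item $\Sacks$ is proper, via the fusion argument of Lemma \ref{lemma:baumgartner}, so $\omega_1$ is preserved.
\item Under \textsf{CH}, $\Sacks$ has the $\omega_2$-c.c. Given $\omega_2$ conditions, apply the $\Delta$-system lemma to their countable domains; this needs $\omega_1^{\omega}=\omega_1$. Then note that there are only $\frakc=\omega_1$ possible restrictions to the countable root.
\item $\Sacks$ adds $\kappa$ distinct reals, namely the generic branches, so $\frakc\ge\kappa>\omega_1$ in $V[G]$.
\end{enumerate}
In $V$ we have $|\aA|=\frakc=\omega_1$, and $\omega_1$ is preserved, so $|\aA|=\omega_1$ in $V[G]$. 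Hence $d(C(St(\aA)))=w(St(\aA))=|\aA|=\omega_1$ there.

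\emph{Renormings.} The absence of wMLUR and SKK renormings follows directly from Theorems \ref{theorem:main_wMLUR} and \ref{theorem:main_SKK}, since $\aA$ has (I) in $V$. For the strictly convex renorming I will use Day's trick. Let $T\colon C(St(\aA))\to\ell_2$ be an injective bounded operator into a strictly convex space. Then $\trnorm{f}=\big(\|f\|_\infty^2+\|Tf\|_2^2\big)^{1/2}$ is an equivalent strictly convex norm, because equality in the triangle inequality forces $Tf$ and $Tg$ to be positively proportional, and injectivity of $T$ then gives $f=g$ on the sphere. In $V[G]$ the singletons $\{n\}\in\aA$ are still atoms of $\aA$, and every nonzero element of $\aA$ lies above one of them. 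So the principal ultrafilters $p_n\in St(\aA)$ form a countable dense set. Consequently $Tf=\big(2^{-n}f(p_n)\big)_{n\io}$ is a bounded injective operator into $\ell_2$, and this yields the strictly convex renorming.

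\emph{Grothendieck property.} This is the step I expect to be the main obstacle. It is a preservation statement, saying that the ground-model $\sigma$-complete algebra $\wo^V$ remains Grothendieck after forcing with $\Sacks$. I will invoke the known preservation results cited in the introduction \cite{SZForExt,SZAdding}. Those results show that forcings with the Sacks property, or more generally $\omega^\omega$-bounding forcings with a suitable Laver-type property, preserve the Grothendieck property of ground-model $\sigma$-complete Boolean algebras. Since $\Sacks$ has the Sacks property, this gives the clause.

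If a direct argument is needed, I would run the same fusion scheme as in Theorems \ref{theorem:main_wMLUR}--\ref{theorem:main_SKK}. Start from a name for a weak* null sequence of measures on $St(\aA)$ that is not weakly null. Use Lemma \ref{lemma:baumgartner}\ref{lemma:baumgartner:open_dense} to decide, level by level, finite pieces of an antichain in $\aA$ on which the measures keep large mass, with only $2^{n|F|}$ possibilities at each fusion stage. Then use the $\sigma$-completeness of $\aA$ in $V$ to take a supremum over the resulting ground-model antichain. This contradicts weak* convergence, in the same way as the classical proof for $\wo$.
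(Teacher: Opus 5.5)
Your proposal is correct and follows the paper's proof clause by clause. The differences are cosmetic: you verify the $\omega_2$-c.c.\ and $\kappa\le\frakc$ by hand instead of citing Baumgartner, and your injection $T$ into $\ell_2$ via the principal ultrafilters is the unpacked form of the paper's embedding $C(St(\aA))\hookrightarrow\elli$ plus the strictly convex renorming of $\elli$.

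For the Grothendieck clause the paper cites \cite[Theorem 3.1]{Bre06}, which is stated for this very forcing. Your appeal to \cite{SZForExt} should also work, since (as I recall) its hypotheses are properness, the Laver property, and that the ground-model reals stay non-meager, and $\Sacks$, being proper with the Sacks property, satisfies all three. Drop \cite{SZAdding}, though: it is about Cohen and random reals destroying the property, not preserving it. Also do not lean on your fallback fusion sketch, which skips the key step of producing a ground-model subantichain whose supremum keeps large measure.
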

\begin{proof}
    Let $G$ be an $\Sacks$-generic filter over $V$. By \cite[Theorem 1.14]{Bau85}, we have $\kappa\le\kappa^\omega=\frakc$ in $V[G]$. Moreover, since the forcing $\Sacks$ preserves $\omega_1$ (by \cite[Theorem 1.11]{Bau85}), we get that the Stone space $St(\aA)$ has weight $\omega_1$ and so the Banach space $C(St(\aA))$ has density $\omega_1$ in $V[G]$. By \cite[Theorem 3.1]{Bre06}, $C(St(\aA))$ is Grothendieck in $V[G]$. Also in $V[G]$, as the Stone space $St(\aA)$ is separable (having $\omega$ as a dense countable subspace), $C(St(\aA))$ isometrically embeds into $C(\bo)\cong\elli$, and since $\elli$ admits a strictly convex renorming (see \cite[Theorem 154]{GMZ22}), the space $C(St(\aA))$ admits such a renorming as well. On the other hand, that $C(St(\aA))$ does not admit any wMLUR renorming or any SKK renorming in $V[G]$ follows from Theorems \ref{theorem:main_wMLUR} and \ref{theorem:main_SKK}.
\end{proof}

For the last main result, we will need the following auxiliary notion from \cite{Ale01}.

\begin{definition}\label{def:property_cn}
    A Boolean algebra $\aA$ has the \textit{Countable Non-Completeness Property} (in short, \textit{(CNC)}) if for any countable antichain $\seqn{A_n}$ in $\aA$ there is $A\iA\sm\{0_\aA\}$ such that $A\wedge A_n=0_\aA$ for all $n\io$.
\end{definition}

It is easy to see that the Boolean algebra $\wo/Fin$ has property (CNC), whereas no $\sigma$-complete Boolean algebra has it. Consequently, $\wo/Fin$ satisfies the assumptions of the following theorem (cf. Corollary \ref{cor:main2}).

\begin{theorem}\label{theorem:main_SC}
Let $\aA\in V$ be a Boolean algebra which has properties (I) and (CNC) in $V$. Let $G$ be an $\Sacks$-generic filter over $V$. Then, in $V[G]$ the Banach space $C(St(\aA))$ admits no strictly convex renormings.
\end{theorem}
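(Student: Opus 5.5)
We follow the scheme of Theorem \ref{theorem:main_wMLUR}, but we have to produce two \emph{distinct} points of the unit sphere whose midpoint is again on the sphere, instead of a sequence. The plan is as follows. In $V[G]$ assume, towards a contradiction, that $\trnorm{\,\cdot\,}$ is an equivalent strictly convex norm. Fix names $\dot{n},\dot{m},\dot{M},\dot{d}$ and a condition $s\in G$ forcing that $\dot n$ is strictly convex and that $(\dot m,\dot M,\dot d)$ is associated with $\dot n$. By the same fusion construction (Lemma \ref{lemma:forcing_renorming} with $L=1$) we get a fusion condition $s^*\le s$ and ground-model antichains $A_{N,k},B_{N,k}$ and indices $i_{N,k}$ with the same properties as there. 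Consequently $s^*$ forces that there are $k_N$ with $d(o,x_{N,k_N},Z_{N,k_N})\to0$.

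Next we use (I) in $V$ to obtain disjoint $A,B$ that interpolate the $A_{N,k}$ and $B_{N,k}$ and are disjoint from all $C_{i_{N,k}}$. We also use (I) to get $C$ containing all $C_{i_{N,k}}$ and disjoint from $A,B$. Put $x=(A,B)$. Each $z\in\xX$ with $x\preceq z$ and $\supp(z)\wedge Z_{N,k}=0_\aA$ for all $N,k$ then lies in $\fF(x_{N,k},Z_{N,k})$. For such $z$ we get
\[
m(o,x_{N,k_N},Z_{N,k_N})\le\trnorm{\varphi(z)}\le M(o,x_{N,k_N},Z_{N,k_N}),
\]
so in $V[G']$, for any generic $G'\ni s^*$, all these $z$ have the same norm $\trnorm{\varphi(z)}=\trnorm{\varphi(x)}$.

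This is where (CNC) comes in. In $V$, apply (CNC) to the countable antichain $\{C_{i_{N,k}}\}\cup\{C_n\colon n\io\}$, or more economically to the $C_n$ together with $A\vee B$ (after absorbing $A\vee B$ appropriately). This yields a nonzero $E\in\aA$ with $E\wedge(A\vee B)=0_\aA$ and $E\wedge C_n=0_\aA$ for all $n$. Then $\alpha(A\vee B\vee E)=\infty$, because $\alpha(A\vee B)=\infty$ already: $A\vee B$ misses the infinitely many $C_{i_{N,k}}$, and $E$ misses all $C_n$. Hence $z^\pm=(A\vee E,B)$ and $(A,B\vee E)$ lie in $\xX$, both dominate $x$, and both have support disjoint from every $Z_{N,k}$. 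So $z^+,z^-\in\fF(x_{N,k},Z_{N,k})$ for all $N,k$.

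By the previous paragraph, in $V[G']$ we get $\trnorm{\varphi(x)+\chi_{[E]}}=\trnorm{\varphi(x)-\chi_{[E]}}=\trnorm{\varphi(x)}$. The midpoint of $\varphi(x)\pm\chi_{[E]}$ is $\varphi(x)$. Normalizing by $\trnorm{\varphi(x)}$, we obtain two distinct unit vectors whose midpoint has norm $1$, contradicting strict convexity, since $E\ne0_\aA$. As $s^*\le s$ was arbitrary below a condition in $G$, a density argument finishes the proof.

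The main obstacle is checking that $E$ can be chosen in $V$ so that adding it to $x$ keeps us inside every $\fF(x_{N,k},Z_{N,k})$. This requires $E$ to be disjoint from all the $C_{i_{N,k}}$, since these make up the $Z$'s. Note that $\alpha$ of the new support is unproblematic because the $C_{i_{N,k}}$ are avoided. We first try to apply (CNC) to the countable antichain consisting of all $C_n$ together with $A$ and $B$, if $A\vee B\ne 1_\aA$. If $A\vee B$ is $1_\aA$ or too large, we shrink $A,B$ by removing a suitable piece. Alternatively, we apply (CNC) inside $-(A\vee B)$ with respect to the antichain $C_{i_{N,k}}$, and then intersect with $-C$ to stay away from $Z$.

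Note that this avoids the wMLUR argument entirely. It uses only exact equality of norms, which holds because the full family of $z$ above has norms pinched between $m$ and $M$ whose difference tends to $0$.
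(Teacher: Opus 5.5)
Your argument is correct and is essentially the paper's proof. It uses the same $L=1$ fusion as in the SKK theorem, an interpolant $x=(A,B)$ from (I), a nonzero element from (CNC) added to $x$ on the left and on the right, the pinching $m\le\trnorm{\varphi(z)}\le M$ yielding $\trnorm{\varphi(x)\pm\chi_{[E]}}=\trnorm{\varphi(x)}$, and the midpoint contradiction.

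Commit to a single choice of $E$, namely (CNC) applied to the infinite antichain $\{A\vee B\}\cup\{C_{i_{N,k}}\colon N\io,\ 1\le k\le\gamma_N\}$. Applying (CNC) to the $C_n$ alone does not give $E\wedge(A\vee B)=0_\aA$. Your extra intersection with $-C$ is unnecessary and may produce $0_\aA$, e.g.\ when $C=-(A\vee B)$.

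The disjointness from $A\vee B$ that you insist on is genuinely needed, both for $x\oplus(E,0_\aA),\,x\oplus(0_\aA,E)\in\xX$ and for the midpoint identity. The paper's write-up does not actually secure it: there the (CNC) element is taken disjoint only from the $A_{N,k}$, $B_{N,k}$, $C_{i_{N,k}}$, not from $A\vee B$.
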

\begin{proof}
The proof is very much similar to the one of Theorem \ref{theorem:main_SKK} (and so of Theorem \ref{theorem:main_wMLUR}). Thus, starting in the ground model $V$, let the objects as in Section \ref{sec:aux_ba} for $\aA$ be given. Then again, in $V[G]$, assume that an equivalent norm $\trnorm{\,\cdot\,}$ on $C(St(\aA))$ is strictly convex. Back in $V$, let $\dot{n},\dot{m},\dot{M},\dot{d}$ be $\Sacks$-names such that $(\dot{n})_G=\trnorm{\,\cdot\,}$, $(\dot{m})_G=m$, $(\dot{M})_G=M$, and $(\dot{d})_G=d$, and fix a condition $s\in G$ forcing that $\dot{n}$ is an equivalent strictly convex norm on $C(St(\check{\aA}))$ and that $(\dot{m},\dot{M},\dot{d})$ is the triple associated with $\dot{n}$. Let sequences
\begin{itemize}
    \item $\Big\langle\big\langle A_{N,1},\ \ldots,\ A_{N,\gamma_N}\big\rangle\colon\ N\io\Big\rangle$,
    \item $\Big\langle\big\langle B_{N,1},\ \ldots,\ B_{N,\gamma_N}\big\rangle\colon\ N\io\Big\rangle$,
    \item $\Big\langle\big\langle i_{N,1},\ \ldots,\ i_{N,\gamma_N}\big\rangle\colon\ N\io\Big\rangle$,
    \item $\Big\langle\big\langle x_{N,1},\ \ldots,\ x_{N,\gamma_N}\big\rangle\colon\ N\io\Big\rangle$,
    \item $\Big\langle\big\langle Z_{N,1},\ \ldots,\ Z_{N,\gamma_N}\big\rangle\colon\ N\io\Big\rangle$,
    \end{itemize}
a pair $x\in\xX$, and a forcing condition $s^{**}\in\Sacks$ (with $s^{**}\le s$) be exactly as in the proof of Theorem \ref{theorem:main_SKK}. Since the Boolean algebra $\aA$ has property (CNC), there is $C\in\aA\sm\{0_\aA\}$ such that
\[C\wedge\bigvee_{1\le k\le\gamma_N}\big(A_{N,k}\vee B_{N,k}\vee C_{i_{N,k}}\big)=0_\aA\]
for every $N\io$. Let
\[y_+=x\oplus\big(C,0_\aA\big)\quad\text{and}\quad y_-=x\oplus\big(0_\aA,C\big).\]
Again for every $N\io$ and $1\le k\le\gamma_N$ we have
\[\tag{1}x,\ y_\pm\in\fF\big(x_{N,k},Z_{N,k}\big).\]

\medskip

Let now $G'$ be an $\Sacks$-generic filter over $V$ containing $s^{**}$ (and so $s$). Let us go to the extension $V[G']$. Let us again also write $\trnorm{\,\cdot\,}=(\dot{n})_{G'}$, $m=(\dot{m})_{G'}$, $M=(\dot{M})_{G'}$, and $d=(\dot{d})_{G'}$. Note that by the assumption the norm $\trnorm{\,\cdot\,}$ is also strictly convex in $V[G']$. As in the proofs of Theorems \ref{theorem:main_wMLUR} and \ref{theorem:main_SKK} (together with the aid of (1)), we get a sequence $\seqN{k_N}$ of natural numbers such that for every $N\io$ we have
\[m\big(o,\, x_{N,k_N},\, Z_{N,k_N}\big)\le\trnorm{\varphi(x)}\le M\big(o,\, x_{N,k_N},\, Z_{N,k_N}\big)\]
and
\[m\big(o,\, x_{N,k_N},\, Z_{N,k_N}\big)\le\trnorm{\varphi(y_\pm)}\le M\big(o,\, x_{N,k_N},\, Z_{N,k_N}\big).\]
By the limit argument, it again follows that
\[\trnorm{\varphi(x)}=\trnorm{\varphi(y_\pm)}>0.\]
Let
\[f=\varphi(x)/\trnorm{\varphi(x)}\quad\text{and}\quad f_\pm=\varphi(y_\pm)/\trnorm{\varphi(y_\pm)}.\]
We have $\trnorm{f}=\trnorm{f_\pm}=1$ and $f=(f_++f_-)/2$, and so $\trnorm{(f_++f_-)/2}=1$, yet $f_+\neq f_-$. Consequently, the norm $\trnorm{\,\cdot\,}$ is not strictly convex in $V[G']$, which is a contradiction.

So, it follows that $C(St(\aA))$ does not admit any strictly convex renormings in $V[G]$.
\end{proof}

Note that if a Boolean algebra $\aA$ is such that $C(St(\aA))$ is Grothendieck and $\bB$ is a quotient Boolean algebra of $\aA$, then $C(St(\bB))$ is Grothendieck as well (cf. \cite[Proposition 2.11.(b)]{Sch82}). Also, as said, the algebra $\wo/Fin$ has both properties (I) and (CNC). Consequently, as in the proof of Corollary \ref{cor:main} and by Theorem \ref{theorem:main_SC}, we get the following result, which immediately implies Main Theorem from Introduction.

\begin{corollary}\label{cor:main2}
Assume that $V$ is a model of \textup{\textsf{ZFC}$+$\textsf{CH}} and let $\aA=(\wo/Fin)^V$. If $G$ is an $\Sacks$-generic filter over $V$, then in $V[G]$ it holds $\omega_1<\kappa\le\frakc$ and the Banach space $C(St(\aA))$ is a Grothendieck space of density $\omega_1$ which has no strictly convex or SKK renormings.
\end{corollary}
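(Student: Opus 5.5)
The plan is to assemble Corollary \ref{cor:main2} from results already established, following the same scheme as the proof of Corollary \ref{cor:main}. Work in $V\models\textsf{ZFC}+\textsf{CH}$, let $\aA=(\wo/Fin)^V$, and let $G$ be $\Sacks$-generic over $V$.

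\emph{Cardinal arithmetic and density.} By \cite[Theorem 1.14]{Bau85} we have $\kappa\le\kappa^\omega=\frakc$ in $V[G]$. By \cite[Theorem 1.11]{Bau85}, $\Sacks$ preserves $\omega_1$. Since $|\aA|=\frakc^V=\omega_1^V=\omega_1$ by \textsf{CH}, the algebra $\aA$ still has cardinality $\omega_1<\kappa\le\frakc$ in $V[G]$. Hence $w(St(\aA))=\omega_1$, and therefore $d\big(C(St(\aA))\big)=\omega_1$. One small point must be checked: $St(\aA)$ has to be computed in $V[G]$, i.e. as the space of ultrafilters of $\aA$ living in $V[G]$. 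Its weight is still $|\aA|$, because the clopen sets $\clopen{A}$ for $A\in\aA$ form a base.

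\emph{Grothendieck property.} We apply \cite[Theorem 3.1]{Bre06} exactly as in Corollary \ref{cor:main}: in $V[G]$, the space $C(St(\wo^V))$ is Grothendieck. Now $\aA$ is the quotient of the algebra $\wo^V$ by the ideal $Fin$, and $\wo^V\in V[G]$ is still a Boolean algebra with this quotient. So $St(\aA)$ is homeomorphic to a closed subspace of $St(\wo^V)$, namely the ultrafilters containing no finite set. The remark preceding the corollary (cf. \cite[Proposition 2.11.(b)]{Sch82}) then shows that $C(St(\aA))$ is Grothendieck in $V[G]$.

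\emph{No nice renormings.} In $V$, the algebra $\aA$ has property (I), since $\omega^*$ is an F-space, and property (CNC), since any countable antichain in $\wo/Fin$ admits a nonzero element disjoint from all its members by the usual diagonal argument. Theorem \ref{theorem:main_SC} therefore gives that $C(St(\aA))$ has no strictly convex renorming in $V[G]$, and Theorem \ref{theorem:main_SKK} gives that it has no SKK renorming there. Both theorems require (I) and (CNC) only in $V$, which is exactly what we have.

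The only step needing real care is the Grothendieck step. We must make sure that the quotient relationship is taken in $V[G]$ between the ground-model algebras, and not against the new $\wo$ or the new $\wo/Fin$. Once this is done, the cited quotient-stability fact applies verbatim.
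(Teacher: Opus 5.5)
Your proposal is correct and matches the paper's argument: the cardinal facts come from \cite{Bau85} as in Corollary \ref{cor:main}, and the Grothendieck property comes from \cite[Theorem 3.1]{Bre06} for $\wo^V$ together with stability under Boolean quotients (\cite[Proposition 2.11.(b)]{Sch82}). The renorming claims then follow from Theorems \ref{theorem:main_SC} and \ref{theorem:main_SKK} using (I) and (CNC) of $\wo/Fin$ in $V$; note that Theorem \ref{theorem:main_SKK} needs only (I), not (CNC).
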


\begin{remark}
    Note that by the main result of \cite{SZNik}, the Boolean algebra $\aA$ from Corollary \ref{cor:main2} (and so the algebra of all clopen subsets of $St(\aA)$) has also the so-called \textit{Nikodym property} in $V[G]$, that is, every sequence $\seqn{\mu_n}$ in $V[G]$ of finite signed finitely additive measures on $\aA$ for which we have $\sup_{n\io}\big|\mu_n(A)\big|<\infty$ for all $A\in\aA$ is norm bounded, i.e. $\sup_{n\io}\big\|\mu_n\big\|<\infty$. In particular, it follows that the linear subspace $\spn\!\big\{\chi_{\clopen{A}}\colon A\in\aA\big\}$ of $C(St(\aA))$ is \textit{barrelled} in the supremum norm topology (see \cite[Definition 2.4]{Sch82}).
\end{remark}

\section{Concluding remarks}

In the final section of the paper we provide several further remarks as well as pose some open questions. 

\subsection{Weaker completeness and interpolation properties}


Consider the following two properties of Boolean algebras weaker than the $\sigma$-completeness, introduced by Haydon \cite{Hay81} and Freniche \cite{Fre84_vhs}, respectively.

\begin{definition}
    A Boolean algebra $\aA$ has the \textit{Subsequential Completeness Property} (in short, \textit{(SC)}) if for every countable antichain $\seqn{A_n}$ in $\aA$ there is $M\in\cso$ such that the supremum $\bigvee_{n\in M}A_n$ exists in $\aA$.
\end{definition}

\begin{definition}
    A Boolean algebra $\aA$ has the \textit{Subsequential Interpolation Property} (in short, \textit{(SI)}) if for every countable antichain $\seqn{A_n}$ in $\aA$ and every $M\in\cso$ there are $A\iA$ and $M'\in\ctblsub{M}$ such that $A_n\le A$ for every $n\in M'$ and $A_n\wedge A=0_\aA$ for every $n\in\omega\sm M'$.
\end{definition}
\noindent We have the following obvious diagram of implications:
\vspace{-1cm}$$\xymatrixcolsep{3pc}\xymatrix{
\\
\sigma\text{-compl.}\ar@{=>}[r]\ar@{=>}[d]&    \text{(I)}\ar@{=>}[d]\\
\text{(SC)}\ar@{=>}[r]&     \text{(SI)}\\
}$$
None of the implications can be reversed, see \cite{Fre84_vhs,Hay81,See68}.

Alexandrov \cite{Ale01} showed that if a Boolean algebra has property (SI), then the Banach space $C(St(\aA))$ does not admit any renorming which is wMLUR or SKK. It seems that combining his methods together with our arguments presented in Section \ref{sec:main}, one can weaken the assumptions in the main theorems of the section and require only that ground model Boolean algebras $\aA$ have property (SI) instead of property (I). The crucial change of the argument would require employing Lemma \ref{lemma:basic}\ref{lemma:basic:exists} in its full generality, that is, for sets $\yY$ bigger than $\{o\}$, in order to control elements $A,B,C\in\aA$, given by property (SI) and covering subantichains of the inductively constructed antichains, cf. \cite{Ale01} for details. However, we deliberately refrained from extending our results to the case of property (SI), since we believe that such an extension would be mostly of technical character rather than of qualitative one.

\medskip

For various other interpolation and completeness properties of Boolean algebras, see e.g. \cite[Section 19.5]{KKLPS}, \cite{KS13,Sob20}, and references therein.

\subsection{Cardinal characteristics of the continuum}

The results presented in Section \ref{sec:main} suggest introducing the following cardinal numbers:
\[\frakwmlur=\min\big\{w(K)\colon\ C(K)\text{ admits no wMLUR renorming}\big\},\]
\[\frakskk=\min\big\{w(K)\colon\ C(K)\text{ admits no SKK renorming}\big\},\]
\[\fraksc=\min\big\{w(K)\colon\ C(K)\text{ admits no strictly convex renorming}\big\}.\]
Of course, one can introduce similar cardinal numbers for other properties of norms defined in Section \ref{sec:intro}.

It is immediate that $\frakwmlur\le\fraksc$. Since any separable Banach space admits a LUR renorming (see \cite{GMZ22}) as well as the space $C(\omega^*)\cong\elli/c_0$ admits no strictly convex renormings nor any SKK renormings and has density $\frakc$, we also have $\omega_1\le\frakwmlur\le\fraksc\le\frakc$ and $\omega_1\le\frakskk\le\frakc$. Further, Corollaries \ref{cor:main} and \ref{cor:main2} imply the following consistency result.

\begin{corollary}
    It is consistent with \textup{\textsf{ZFC}} that $\frakwmlur=\frakskk=\fraksc=\omega_1<\frakc$.
\end{corollary}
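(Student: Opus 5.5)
The plan is to prove the two halves of the statement separately: the inequality $\frakwmlur\le\frakskk$-type bounds in ZFC already give $\omega_1\le\frakwmlur\le\fraksc$ and $\omega_1\le\frakskk$, as noted just before the corollary. So it suffices to find one model in which all three cardinals are at most $\omega_1$ while $\frakc>\omega_1$. Start with a ground model $V$ of \textsf{ZFC}$+$\textsf{CH} and force with $\Sacks$ for $\kappa=\omega_2$. Let $G$ be $\Sacks$-generic over $V$.

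The key step is to apply Corollary \ref{cor:main2} to $\aA=(\wo/Fin)^V$. In $V[G]$ it gives $\omega_1<\omega_2\le\frakc$. It also shows that $K=St(\aA)$ has weight $|\aA|=\omega_1$ and that $C(K)$ admits no strictly convex and no SKK renorming. Hence $\fraksc\le\omega_1$ and $\frakskk\le\omega_1$ in $V[G]$. Since $\frakwmlur\le\fraksc$ (every wMLUR norm is strictly convex, by the diagram in Section \ref{sec:intro}), we also get $\frakwmlur\le\omega_1$. Alternatively, Corollary \ref{cor:main} applied to $\aA=\wo^V$, whose Stone space also has weight $\omega_1$ in $V[G]$, gives $\frakwmlur\le\omega_1$ directly.

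For the lower bounds, I would invoke the fact recalled in the text: every separable Banach space admits a LUR renorming, and LUR implies wMLUR, SKK and strict convexity. If $w(K)=\omega$, then $C(K)$ is separable, so none of the three cardinals can be $\omega$. Finite $K$ are likewise trivial, as $C(K)$ is then finite-dimensional. This gives $\frakwmlur=\frakskk=\fraksc=\omega_1<\frakc$ in $V[G]$.

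There is no real obstacle, since all the work sits in Corollaries \ref{cor:main} and \ref{cor:main2}. The only point needing care is that weight is computed in $V[G]$. The value $|\aA|=\omega_1$ holds there because \textsf{CH} holds in $V$ and $\Sacks$ preserves $\omega_1$, as noted in the proof of Corollary \ref{cor:main}.
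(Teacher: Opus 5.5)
Your proposal is correct and follows essentially the paper's argument. The ZFC lower bound $\omega_1$ comes from separable Banach spaces admitting LUR renormings. The upper bounds come from Corollaries \ref{cor:main} and \ref{cor:main2} in the side-by-side Sacks extension of a \textsf{CH} model, using $\frakwmlur\le\fraksc$ and the preservation of $\omega_1$ to get weight $\omega_1$. The only slip is your opening phrase about ``$\frakwmlur\le\frakskk$-type bounds'': no such ZFC inequality is known (the paper explicitly asks whether $\frakwmlur\neq\frakskk$ is consistent), and you never use it, so that phrase should simply be deleted.
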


We are however not aware of any model of \textsf{ZFC} where at least some of the above-defined cardinal numbers are strictly greater than $\omega_1$.

\begin{question}\label{ques:greater_than_omega1}
    Let $\frakx\in\{\frakwmlur,\frakskk,\fraksc\}$. Is there a model of \textup{\textsf{ZFC}} in which $\frakx>\omega_1$?
\end{question}

A natural place to consider the above question is any model of Martin's axiom and the negation of the Continuum Hypothesis.

\begin{question}\label{ques:ma}
    Assume \textup{\textsf{MA}}. Is it true that $\frakwmlur=\frakskk=\fraksc=\frakc$? 
\end{question}

A typical approach to both Questions \ref{ques:greater_than_omega1} and \ref{ques:ma} could be to prove that in \textsf{ZFC} any of the numbers $\frakwmlur$, $\frakskk$, or $\fraksc$ is greater than some of the standard cardinal characteristics occurring e.g. in Cicho\'n's diagram or van Douwen's diagram (see \cite{Bla10}). However, as the next proposition shows this is pointless, since both the additivity of the null ideal, $\add(\nN)$, and the pseudointersection number $\frakp$, that is, the smallest characteristics in Cicho\'n's diagram and van Douwen's diagram, respectively, can be consistently strictly larger than $\frakwmlur$ and $\fraksc$. As a model of \textsf{ZFC} witnessing this one can take any model of \textsf{PFA}$(S)$, the Proper Forcing Axiom for proper posets not destroying a given fixed \textit{Suslin tree} $S$ (i.e. a tree of height $\omega_1$ without uncountable chains and uncountable antichains).

\begin{proposition}\label{prop:suslin_characteristics}
It is consistent with \textup{\textsf{ZFC}} that
\[\frakwmlur=\fraksc=\omega_1<\frakp=\add(\nN)=\omega_2=\frakc.\]
\end{proposition}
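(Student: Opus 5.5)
We will work in a model of \textsf{PFA}$(S)$, where $S$ is a fixed coherent Suslin tree. By the standard theory (Todorcevic), such a model is obtained by countable support iteration of length a supercompact $\kappa$ of proper posets preserving $S$. In it, $S$ remains Suslin and $\frakc=\omega_2$. The plan has two parts: the cardinal characteristics, and the renorming bounds.

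\textbf{Cardinal characteristics.} We will use that \textsf{PFA}$(S)$ implies \textsf{MA}$_{\omega_1}$ for posets preserving $S$. Both $\sigma$-centered posets and measure algebras (random forcing) preserve Suslin trees. Hence \textsf{MA}$_{\omega_1}(\sigma\text{-centered})$ holds, giving $\frakp>\omega_1$. Likewise \textsf{MA}$_{\omega_1}$ for amoeba forcing holds; amoeba forcing is $\sigma$-linked and so preserves $S$. This gives $\add(\nN)>\omega_1$. Since $\frakp,\add(\nN)\le\frakc=\omega_2$, we get $\frakp=\add(\nN)=\omega_2=\frakc$. The facts that \textsf{PFA}$(S)$ yields $\frakc=\omega_2$ and that these posets preserve $S$ are standard, and we will cite them.

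\textbf{Renorming bounds.} The Suslin tree $S$ yields a Suslin line. Take the lexicographically ordered set of branches (or maximal antichains) of $S$, completed and with end-points added. This gives a compact linearly ordered space $K_S$ which is ccc and non-separable, with $w(K_S)=\omega_1$ because $|S|=\omega_1$ and the order topology has a base indexed by pairs of nodes. By Haydon \emph{et al.} \cite{HJNR00}, as recalled in the Introduction, $C(K_S)$ admits no strictly convex renorming. Hence $\fraksc\le\omega_1$.

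We still need $\frakwmlur\le\omega_1$. Since every wMLUR norm is strictly convex, a space with no strictly convex renorming has no wMLUR renorming either. So the same $K_S$ witnesses $\frakwmlur\le\omega_1$, giving $\frakwmlur\le\fraksc\le\omega_1$. Together with the lower bound $\omega_1\le\frakwmlur$ noted before the proposition, this yields $\frakwmlur=\fraksc=\omega_1$.

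\textbf{Main obstacle.} The main obstacle is ensuring that $K_S$ has weight exactly $\omega_1$, i.e.\ that the Dedekind completion of the Suslin line does not blow up the weight. We will handle this by using the standard construction of a Suslin continuum from an $\omega_1$-tree. Its points are determined by branches and limits, and the intervals with endpoints given by nodes form a base of size $\omega_1$.

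The other point to check is that the result of \cite{HJNR00} (for Suslin lines no strictly convex norm) is a \textsf{ZFC} statement about any Suslin line. This is the case, so no preservation argument is needed there. Everything is thus reduced to standard consequences of \textsf{PFA}$(S)$, whose consistency (from a supercompact cardinal) we will cite.
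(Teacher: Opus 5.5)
Your proposal is correct and follows the paper's route: you work in a model of \textsf{PFA}$(S)$, take the compact Suslin line $K_S$ of weight $\omega_1$ induced by $S$, apply \cite{HJNR00} to rule out strictly convex renormings, and conclude via $\omega_1\le\frakwmlur\le\fraksc$. The only difference is that you derive $\frakp=\add(\nN)=\omega_2=\frakc$ yourself from \textsf{MA}$_{\omega_1}$ for $S$-preserving Knaster posets ($\sigma$-centered posets and amoeba forcing), whereas the paper just cites \cite[Proposition 2.1]{RY12}; also, ``maximal antichains'' in your construction of the line should read ``maximal chains''.
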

\begin{proof}
    Let $W$ be a model of \textsf{ZFC}+\textsf{PFA}$(S)$, where $S$ is a Suslin tree. By \cite[Proposition 2.1]{RY12}, it holds $\omega_1<\frakp=\add(\nN)=\omega_2=\frakc$ in $W$. As $S$ is a Suslin tree in $W$, it induces a compact Suslin line $K_S$ (i.e. a Suslin line with both end-points added; see \cite[Theorem 5.13]{Kun80}). By \cite[Example 3]{HJNR00}, the Banach space $C(K_S)$ admits no strictly convex renorming. Moreover, as $w(K_S)=\omega_1$ and so $d(C(K_S))=\omega_1$, we have $\frakwmlur=\fraksc=\omega_1$ in $W$.
\end{proof}

However, a proper \textsf{ZFC} proof of any inequality like, e.g., $\fraksc\le\frakp$ would be still desired. 

\begin{question}
     Let $\frakx\in\{\frakwmlur,\frakskk,\fraksc\}$. Is there any standard cardinal characteristic of the continuum $\fraky$, e.g. in Cicho\'n's diagram or van Douwen's diagram, for which we have $\frakx\le\fraky$ in \textup{\textsf{ZFC}}?
\end{question}

\begin{remark}\label{remark:suslin_sbs_Sacks}
    By \cite[Theorem A]{HJNR00}, the space $K_S$ (induced by the Suslin tree $S$) from the proof of the proposition admits a (sequential) Kadets--Klee renorming. Consequently, it does not witness that $\frakskk=\omega_1$ holds in the model $W$ (in fact, we do not know the value of $\frakskk$ in $W$). For this kind of reason, to establish Corollary \ref{cor:main2}, we could not exploit any Suslin tree existing in the extension $V^{\Sacks}$ (note that Suslin trees exist in $V^{\Sacks}$, because the Diamond Principle $\Diamond(\non(\mM))$ holds there, see \cite{MHD03}), but we had to use some other structure, e.g. the ground model Boolean algebra $(\wo/Fin)^V$. Also, the space $C(K_S)$ is not Grothendieck, as $K_S$ contains non-trivial convergent sequences.
\end{remark}

To complete Proposition \ref{prop:suslin_characteristics}, let us also pose the following question concerning $\frakskk$.

\begin{question}
    What is the value of $\frakskk$ in models of \textup{\textsf{PFA}}$(S)$?
\end{question}

Our last question asks whether the numbers $\frakwmlur$, $\fraksc$, and $\frakskk$ can be distinguished.

\begin{question}
    Is any of the following inequalities consistently possible:
    \begin{itemize}
        \item $\frakwmlur<\fraksc$,
        \item $\frakwmlur\neq\frakskk$,
        \item $\fraksc\neq\frakskk$?
    \end{itemize}
\end{question}

Of course, the latter question has only sense in the case when Question \ref{ques:greater_than_omega1} has an affirmative answer.

\subsection{Grothendieck $C(K)$-spaces}

The Banach spaces $C(K)$ consistently constructed in Corollaries \ref{cor:main} and \ref{cor:main2} are Grothendieck spaces which admit neither wMLUR renormings nor SKK renormings, so in particular no MLUR, wLUR, LUR, or KK renormings. As it is well known, the same regards the spaces $C(\bo)\cong\elli$ and $C(\omega^*)\cong\elli/c_0$ (see \cite{Ale01} or \cite{GMZ22}), which are classical examples of Grothendieck Banach spaces $C(K)$, as well as Haydon's Grothendieck space $C(K)$ from \cite{Hay81} which does not contain any isomorphic copy of $\elli$ (see \cite{AB88}). It seems that no example of a Grothendieck $C(K)$-space admitting, e.g., a LUR renorming is known (cf. Questions in \cite{Ale01} and \cite{AB88}).

\begin{question}
    Does there exist a Grothendieck $C(K)$-space which admits a (w)(M)LUR renorming or a (S)KK renorming?
\end{question}

Concerning strictly convex renormings, recall that $C(\bo)$ admits such a renorming whereas $C(\beta\omega_1)\cong\elli(\omega_1)$ does not (see \cite{GMZ22}).

\subsection{Adding other reals\label{sec:adding}}

An intuitive meaning of the results presented in Section \ref{sec:main} is that the side-by-side Sacks forcing $\Sacks$ does not add any new renormings of the Banach spaces $C(St(\aA))$ for ground model Boolean algebras $\aA$ with properties (I) and (CNC) which are wMLUR, SKK, or strictly convex. Note that the arguments too work without any changes for $\kappa=1$ and $\Sacks=\SS$ (so for the ordinary Sacks forcing). Moreover, it seems that exploiting the methods presented in \cite{SZForExt} one could prove the results also for other classes of forcings such as, e.g., the Miller forcing or the Silver forcing. However, at this moment, it would be rather more interesting to find an example of a standard notion of forcing which \emph{does add} a new renorming which is, e.g., LUR. 

\begin{question}\label{ques:forcing_renorming}
    Which standard notions of forcing $\PP$ have the following property: for every ground model $\sigma$-complete Boolean algebra $\aA\in V$ and every $\PP$-generic filter $G$ over $V$, in $V[G]$ the Banach space $C(St(\aA))$ admits a renorming which is wMLUR, SKK, or strictly convex?
\end{question}

Note that a trivial and uninteresting way to force a LUR renorming of the Banach space $C(St(\aA))$ for any ground model Boolean algebra of size $\le\frakc$ is by collapsing $\frakc$ to $\omega$ (see, e.g., \cite[Example 15.20]{Jec03}), thus making the Stone space $St(\aA)$ metrizable and so the Banach space $C(St(\aA))$ separable.

In \cite{SZAdding} it was showed that extending the ground model $V$ by adding a single Cohen or random real also adds new sequences of functionals witnessing that the Banach spaces $C(St(\aA))$ for all ground model $\sigma$-complete Boolean algebras $\aA$ do not have the Grothendieck property, even though they do have it in $V$. Inspired by this, we pose the following variant of Question \ref{ques:forcing_renorming}.

\begin{question}
    Let $\PP\in V$ be a notion of forcing adding a Cohen real or a random real. Let $\aA\in V$ be a $\sigma$-complete Boolean algebra. Assume that $G$ is a $\PP$-generic filter over $V$. In $V[G]$, does the Banach space $C(St(\aA))$ admit a renorming which is, e.g., LUR?
\end{question}

\end{document}